\newtheorem{thm}{Theorem}[section]
\newtheorem{cor}{Corollary}[section]
\newtheorem{lma}{Lemma}[section]
\numberwithin{equation}{section}
\begin{document}
 
\title{Arithmetic properties of $(\ell,m)$-regular colored partitions}
\author{Yashas N., C. Shivashankar and S. Chandankumar}

\address[Yashas N.]{Department of Mathematics and Statistics, M. S. Ramaiah University of Applied Sciences, Peenya Campus, Peenya 4th Phase, Bengaluru-560 058, Karnataka, India.} 
\email{yn37@proton.me}

\address[C. Shivashankar]{Department of Mathematics, Vidyavardhaka College of Engineering, Gokulam III Stage, Mysuru, Karnataka 570 002, India} \email{shankars224@gmail.com}

\address[S. Chandankumar]{Department of Mathematics and Statistics, M. S. Ramaiah University of Applied Sciences, Peenya Campus, Peenya 4th Phase, Bengaluru-560 058, Karnataka, India.} 
\email{chandan.s17@gmail.com}

\subjclass[2010]{11P83, 05A15, 05A17} \keywords{$(\ell,m)-$regular partitions,color partitions}

\maketitle
\begin{abstract}
    Let $b^{k}_{\ell,m}(n)$ denotes the number of $k-$colored partitions of $n$ into parts that are not multiples of $\ell$ or $m$. We establish several congruence relations for $b_{\ell,m}(n)$. For instance, for any nonnegative integer $n$
    $$b^{2}_{4,5}(8n+7) \equiv 0 \pmod{40}.$$
\end{abstract}
\section{Introduction}
\noindent An unrestricted partition of a non-negative integer $n$ is defined as a sum of a finite non-increasing sequence of positive integers. For example, there are $5$ partitions of $4$:
$$4,\,\,\,3+1,\,\,\,2+2,\,\,\,2+1+1,\,\,\,1+1+1.$$
We denote the number of unrestricted partition of $n$ by $p(n)$. The generating function of $p(n)$ is given by 
$$\sum_{n=0}^{\infty} p(n) q^n =\dfrac{1}{(q;q)_\infty},$$ 
where
\[(a;q)_n=\begin{cases}
	1, & \text{for} \ \ n=0,\\
	\displaystyle \prod_{k=1}^{n} (1-aq^{k-1}), &\text{for} \ \ n>0,
\end{cases}\]
is the $q$-shifted factorial and, 
\[(a;q)_\infty=\lim_{n\rightarrow\infty}(a;q)_n, \qquad |q|<1.\]

An $\ell$-regular partition is a partition in which none of the parts is divisible by $\ell$. Let $b_{\ell}(n)$ denote the number of $\ell$-regular partitions of $n$, with the notation that $b_{\ell}(0) = 1$. The generating function for $b_{\ell}(n)$ is given by:

\begin{equation}\label{bkln}
	\sum_{n=0}^{\infty} b_{\ell}(n) q^n = \frac{f_{\ell}}{f_1},
\end{equation}
where $\displaystyle f_r=(q^r;q^r)_\infty=\Pi_{n=1}^\infty (1-q^{nr})$ for $|q|\leq 1.$ These notations are followed through out this article. Let $b^{k}_{\ell}(n)$ denotes the number of $\ell-$ regular partitions of $n$ in $k$ colors then its generating function is given by 

\begin{equation} \label{1main}
	\displaystyle \sum_{n=0}^\infty b^{k}_{\ell}(n)q^n = \frac{f_{\ell}^k }{f_1^k}.
\end{equation}

Similarly, for two coprime positive integers $\ell$ and $m$, a $(\ell, m)$-regular partition of $n$ is a partition in which none of the parts is divisible by $\ell$ or $m$. Let $b_{\ell,m}(n)$ denote the number of such partitions of $n$, with the convention that $b_{\ell,m}(0) = 1$. The generating function for $b_{\ell,m}(n)$ is given by

\begin{equation}
	\sum_{n=0}^{\infty} b_{\ell,m}(n) q^n = \frac{f_{\ell} f_{m}}{f_1 f_{\ell m}}.
\end{equation}

In recent years, the arithmetic properties of $b_{\ell,m}(n)$ have been extensively studied. In 2016, M. S. Mahadeva Naika, B. Hemanthkumar, and H. S. Sumanth Bharadwaj \cite{naika2016congruences} established infinite families of congruences modulo 2 for $b_{3,5}(n)$. Drema and Saikia \cite{drema2022arithmetic} derived similar congruences modulo 2 for $b_{4,m}(n)$ when $m = 3, 5, 7,$ and $9$. 

Further contributions were made by T. Kathiravan, K. Srinivas, and Usha K. Sangale \cite{kathiravan2023some}, who derived infinite families of congruences for $b_{\ell,m}(n)$ modulo 2 when $(\ell, m) =(3, 8),(4, 7),$ for $b_{\ell,m}(n)$ modulo 8, modulo 9 and modulo 12 when $(\ell,m) = (4, 9).$  Recently, Kathiravan, Dipramit Majumdar, and Usha \cite{kathiravan2023some1}, utilizing theta function identities and Newman's results, obtained infinite families of congruences modulo 2 for $b_{\ell,m}(n)$ when $(\ell, m) = (2, 7), (5, 8), (4, 11)$, and modulo 4 for $(4,5)$-regular partitions.

In this article,  we study the partition function $b^{k}_{\ell,m}(n)$, $k$-colored partition of $n$ in which no part is a multiple of $\ell$ or $m$. The generating function of such partition function is given by
\begin{equation} \label{main}  
\sum_{n=0}^\infty b^{k}_{\ell,m}(n) q^n = \frac{f_{\ell}^k f_m^k}{f_1^k f_{lm}^k}.  
\end{equation}  

Mahadeva Naika and Harishkumar T. \cite{naika2019congruences} established numerous infinite families of congruences modulo powers of 2 for $b^2_{4,5}(n)$. Recently, Veena and Fathima \cite{veena2023fathima}, employing the theory of the modular forms and $q$-series techniques, derived several infinite families of congruences modulo 4 for $b^2_{4,9}(n)$.  

The Legendre symbol is a function of $a$ and $p\geq3$, and is used frequently in this article. It is defined by

\[
\left( \frac{a}{p} \right) =
\begin{cases} 
1 & \text{if } a \text{ is a quadratic residue modulo } p \text{ and } a \not\equiv 0 \pmod{p}, \\
-1 & \text{if } a \text{ is a quadratic non-residue modulo } p, \\
0 & \text{if } a \equiv 0 \pmod{p}.
\end{cases}
\]
  In Section \ref{s3}, we study the partition functions $b^2_{3,2t}(n)$ and $b^2_{3,4t}(n)$ and derive a characterization for $b^2_{3,4}(4n+1)$. In Section \ref{s4}, we establish an infinite family of congruence modulo 4 for $b^2_{2,5}(n).$ Section \ref{s5} is devoted the partition function $b^2_{5,4t}(n)$. In the last section, we study the arithmetic properties of the partition function $b^3_{2,3t}(n).$
	\section{Preliminaries}
	 To begin this section, we introduce Ramanujan’s general theta function
	$f(a,b),$ defined as :
\begin{align}
	f(a,b) = \sum_{n=-\infty}^{\infty} a^{\frac{n(n+1)}{2}} b^{\frac{n(n-1)}{2}}, \quad |ab| < 1.
\end{align}
The following definitions of theta functions  $\varphi$, $\psi$ and $f$ with $|q|<1$ are classical:
\begin{eqnarray}
	\varphi(q)&:=&f(q,q)=\sum_{n=-\infty}^{\infty}q^{n^2} = \frac{f_2}{f_1^2f_4^2},\\
	\psi(q)&:=&f(q,q^3) =\dfrac{1}{2}\sum_{n=-\infty}^{\infty}q^{n(n+1)/2} =\frac{f_2^2}{f_1},\\
    \text{and}\\
	f(-q)&:=&f(-q,-q^2)=\sum_{n=-\infty}^{\infty}(-1)^n q^{n(3n-1)/2}=f_1.
\end{eqnarray}
\begin{lma} \cite[p. 40, Entry 25]{berndt2012ramanujan} The following 2-dissections holds 
	\begin{equation}\label{f1.2}
		f_1^2 =\frac{f_2 f_8^5}{f_4^2 f_{16}^2}
		-2q\frac{f_2 f_{16}^2}{f_8},
	\end{equation}
	\begin{equation}
		\label{1_f1.2}
		\frac{1}{f_1^2} =\frac{f_8^5}{f_2^5 f_{16}^2}
		+2q\frac{f_4^2 f_{16}^2}{f_2^5 f_8},
	\end{equation}
	\begin{equation}
		\label{1_f1.4}
		\frac{1}{f_1^4} =\frac{f_4^{14}}{f_2^{14} f_{8}^4}
		+4q\frac{f_4^2 f_{8}^4}{f_2^{10}}.
	\end{equation}
\end{lma}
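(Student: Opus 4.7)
The three identities all sit on top of the single fundamental dissection
\begin{equation*}
\varphi(q) = \varphi(q^4) + 2q\,\psi(q^8),
\end{equation*}
obtained by splitting $\sum_{n\in\mathbb{Z}} q^{n^2}$ according to the parity of $n$: the even indices $n=2k$ give $\varphi(q^4)$, and the odd indices $n=2k+1$ give $2q\sum_{k\geq 0}q^{4k(k+1)}=2q\,\psi(q^8)$. Replacing $q$ by $-q$ produces the companion $\varphi(-q)=\varphi(q^4)-2q\,\psi(q^8)$. Combining these with the standard product representations $\varphi(q)=f_2^5/(f_1^2 f_4^2)$, $\varphi(-q)=f_1^2/f_2$ and $\psi(q)=f_2^2/f_1$, and applying the substitutions $q\mapsto q^4$ and $q\mapsto q^8$, one obtains $\varphi(q^4)=f_8^5/(f_4^2 f_{16}^2)$ and $\psi(q^8)=f_{16}^2/f_8$.

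With these in hand, \eqref{f1.2} is immediate: multiply the dissection of $\varphi(-q)$ by $f_2$ and use $f_1^2=f_2\,\varphi(-q)$. Formula \eqref{1_f1.2} follows from the reciprocal relation $1/f_1^2=\varphi(q)\,f_4^2/f_2^5$ by substituting the dissection of $\varphi(q)$ and distributing the factor $f_4^2/f_2^5$ through both summands.

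For \eqref{1_f1.4}, I would first derive the quadratic companion
\begin{equation*}
\varphi(q)^2 = \varphi(q^2)^2 + 4q\,\psi(q^4)^2
\end{equation*}
by applying the same parity argument to $\sum_{n,m}q^{n^2+m^2}$: the same-parity pairs $(n,m)$ correspond bijectively via $(n,m)=(u+v,u-v)$ to $\mathbb{Z}^2$ and contribute $\varphi(q^2)^2$, while the opposite-parity pairs contribute $4q\,\varphi(q^4)\psi(q^8)$, which coincides with $4q\,\psi(q^4)^2$ through the identity $\varphi(q^4)\psi(q^8)=\psi(q^4)^2=f_8^4/f_4^2$ read off from the product representations. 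Inserting $\varphi(q^2)^2=f_4^{10}/(f_2^4 f_8^4)$ and multiplying both sides by $f_4^4/f_2^{10}$ (so that the left-hand side becomes $\varphi(q)^2 f_4^4/f_2^{10}=1/f_1^4$) yields \eqref{1_f1.4}.

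The argument is mostly routine symbolic manipulation once the two dissections are in place; the main bookkeeping obstacle is matching powers of $f_r$ in \eqref{1_f1.4}, where the cross-terms coming from the quadratic dissection have to be correctly identified with $\psi(q^4)^2$. Since this lemma appears verbatim as \cite[p.~40, Entry 25]{berndt2012ramanujan}, the shortest honest write-up is simply to invoke that reference; the sketch above shows how the identities can be recovered from first principles if needed.
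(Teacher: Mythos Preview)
Your derivation is correct: the parity dissections of $\varphi(q)$ and $\varphi(q)^2$ that you describe do yield \eqref{f1.2}--\eqref{1_f1.4} once the standard product forms of $\varphi$, $\varphi(-q)$, and $\psi$ are substituted, and your verification of $\varphi(q^4)\psi(q^8)=\psi(q^4)^2$ via products is accurate.

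As to comparison: the paper does not supply a proof at all. This lemma is stated with a bare citation to Berndt~\cite{berndt2012ramanujan} and is used as a black box throughout. So your write-up is not so much an alternative route as the only route actually on the page; the honest minimal version you mention at the end---simply invoking Entry~25---is exactly what the paper does. Your sketch is a useful supplement for a reader who wants to see why the identities hold, but for the purposes of matching the paper you could stop after the citation.
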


\begin{lma} In \cite{xia2013analogues}, the following 2-dissections holds
	\begin{align}
		\frac{f_3}{f_1} &=
		\frac{f_4 f_6 f_{16}f_{24}^{2}}{f_2^{2} f_8 f_{12}f_{48}}
		+ q \frac{f_6 f_8^2 f_{48}}{f_2^2f_{16}f_{24}},\label{f3f1}\\
\frac{f_3^2}{f_1^2}
&=\frac{f_4^4 f_6 f_{12}^2}{f_2^5 f_8 f_{24}}
+2q\frac{f_4 f_6^2 f_8 f_{24}}{f_2^4 f_{12}},\label{f3.2_f1.2}\\
 \frac{f_1^2}{f_3^2} &= \frac{f_2f_4^2f_{12}^4}{f_6^5f_8f_{24}}-2q\frac{f_2^2f_8f_{12}f_{24}}{f_4f_6}.\label{f1.2_f3.2}\\
\frac{f_3^4}{f_1^4}
&=\frac{f_4^8 f_6^2 f_{12}^{4}}{f_2^{10} f_8^2 f_{24}^{2}}
+ 4q \frac{f_4^5 f_6^3 f_{12}}{f_2^9}
+ 4q^2 \frac{f_4^2 f_6^4 f_8^2 f_{24}^{2}}{f_2^8 f_{12}^{2}}.\label{f3f4}\\
&       \frac{1}{f_1f_3} =\frac{f_8^2 f_{12}^5}{f_2^2 f_4 f_6^4 f_{24}^2} + q \frac{f_4^5 f_{24}^2}{f_2^4 f_6^2 f_8^2 f_{12}}. \label{1byf1f3}
        \end{align}
\end{lma}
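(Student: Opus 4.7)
The five identities listed in Lemma 2.2 are 2-dissections of the eta-quotients $f_3/f_1$, $f_3^2/f_1^2$, $f_1^2/f_3^2$, $f_3^4/f_1^4$, and $1/(f_1 f_3)$, meaning that each separates the $q$-expansion of the left-hand side into residue classes modulo $2$ in the $q$-exponent (with an additional $q^2$-component appearing for $f_3^4/f_1^4$). They are attributed to Xia and Yao in \cite{xia2013analogues}, so the cleanest plan is to invoke their derivation directly. If I were to reconstruct them independently, my approach would go as follows.

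First, I would rewrite each side in terms of Ramanujan's theta functions $\varphi(-q) = f_1^2/f_2$, $\psi(q) = f_2^2/f_1$, and $f(-q) = f_1$, so that each ratio becomes a product of theta functions at various arguments. The first identity for $f_3/f_1$ can then be derived by starting from Ramanujan's 2-dissections of $\psi$ and $\varphi$ modulo $2$, combined with the classical relation expressing $f_3/f_1$ through Borwein's cubic theta functions, and collecting even and odd powers of $q$. The second identity for $f_3^2/f_1^2$ follows by squaring the first and simplifying, while its reciprocal $f_1^2/f_3^2$ is obtained by cross-multiplying against the conjugate dissection under $q \mapsto -q$ to rationalize. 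The identity for $f_3^4/f_1^4$ is the further square of $f_3^2/f_1^2$, producing three components distinguished by the $q$-exponent modulo $2$; and the identity for $1/(f_1 f_3)$ can be derived by dividing the first dissection by $f_3^2$ and applying the $q \mapsto q^3$ analogue of \eqref{1_f1.2}.

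The principal obstacle is not conceptual but bookkeeping: each squaring or division introduces many new $f_r$ factors, and matching them against the specific compact eta-quotients on the right-hand side requires careful consolidation via standard duplication identities for the $f_r$. A cleaner alternative, once the shape of each dissection is known, is to verify the identity as an equality of modular forms on $\Gamma_0(N)$ for a suitable level $N$ by matching Fourier coefficients up to the Sturm bound, a finite calculation. For the purposes of this paper, however, simply citing \cite{xia2013analogues} is the most efficient course.
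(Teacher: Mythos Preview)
Your proposal is correct and aligns with the paper: the paper gives no proof for this lemma, simply citing \cite{xia2013analogues} as the source of all five dissections. Your sketch of an independent derivation is reasonable but superfluous here, since the paper treats these identities purely as cited preliminaries.
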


\begin{lma} In \cite{hirschhorn1993cubic}, the following 2-dissection holds
\begin{align}
    \frac{f_3^3}{f_1} &= \frac{f_4^3f_6^2}{f_2^2f_{12}}+q\frac{f_{12}^3}{f_4}.\label{f3.3_f1}
\end{align}
\end{lma}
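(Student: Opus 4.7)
The stated identity is the 2-dissection of $f_3^3/f_1$, the generating function for 3-core partitions, originally due to Hirschhorn \cite{hirschhorn1993cubic}. My plan is to reprove it within the theta-function framework of Section 2 by combining the 2-dissection \eqref{f3f1} of $f_3/f_1$ with a 2-dissection of $f_3^2$ extracted from \eqref{f1.2}.

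Substituting $q \mapsto q^3$ in \eqref{f1.2} yields
$$f_3^2 = \frac{f_6 f_{24}^5}{f_{12}^2 f_{48}^2} - 2q^3\,\frac{f_6 f_{48}^2}{f_{24}},$$
which is the 2-dissection of $f_3^2$ into its even and odd $q$-parts, since the first term is a power series in $q^6$ and the second is $q^3$ times such a series. Writing $f_3/f_1 = A + qB$ from \eqref{f3f1}, and $f_3^2 = C + q^3 E$ from the display above, with $A,B,C,E$ all series in $q^2$, the factorization $f_3^3/f_1 = (f_3/f_1)\cdot f_3^2$ expands to
$$\frac{f_3^3}{f_1} = (AC + q^4 BE) + q(BC + q^2 AE),$$
in which the first bracket is the even part and the second the odd part. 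It then suffices to verify the two eta-quotient identities
$$AC + q^4 BE = \frac{f_4^3 f_6^2}{f_2^2 f_{12}}, \qquad BC + q^2 AE = \frac{f_{12}^3}{f_4}.$$

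I expect the \textbf{main obstacle} to be the algebraic verification of these two identities. Each resulting expression involves eta-products in the bases $q^2, q^4, q^6, q^8, q^{12}, q^{16}, q^{24}, q^{48}$, and the cancellation demands patient use of the standard duplication relations for $f_n$. A likely cleaner alternative is to treat the target as an equality of weight-$1$ eta-quotients on a congruence subgroup of $\mathrm{SL}_2(\mathbb{Z})$, in which case matching coefficients up to an explicit Sturm bound suffices to establish the identity rigorously. Given that the lemma is invoked only as an auxiliary tool for the arithmetic results of later sections, the direct citation of Hirschhorn \cite{hirschhorn1993cubic} remains the most efficient route, with the dissection-based derivation above serving as an independent sanity check.
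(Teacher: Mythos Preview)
The paper does not supply a proof of this lemma at all: it is listed among the preliminary identities in Section~2 with a bare citation to \cite{hirschhorn1993cubic}, and your closing recommendation to simply cite Hirschhorn is exactly what the paper does.

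Your dissection-based sketch is a genuinely different, self-contained route. Its structure is sound: since the 2-dissection of a power series is unique, multiplying the 2-dissection \eqref{f3f1} of $f_3/f_1$ by the 2-dissection of $f_3^2$ (obtained from \eqref{f1.2} under $q\mapsto q^3$) must produce the 2-dissection of $f_3^3/f_1$. However, as you yourself flag, you stop short of actually verifying the two eta-quotient identities
\[
AC+q^4BE=\frac{f_4^3 f_6^2}{f_2^2 f_{12}},\qquad BC+q^2AE=\frac{f_{12}^3}{f_4},
\]
so the proposal reduces the lemma to two unproved auxiliary identities rather than establishing it. The Sturm-bound alternative you mention would close this gap rigorously, but neither it nor the direct eta-product manipulation is carried out. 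In short: your final disposition agrees with the paper, and your alternative argument is a correct outline but not yet a complete proof.
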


\begin{lma} In \cite{NDBOJH}, the following 2-dissection holds
\begin{align}
    \frac{f_3}{f_1^3} = \frac{f_4^6 f_6^3}{f_2^9 f_{12}^2} + 3q \frac{f_4^2 f_6 f_{12}^2}{f_2^7}.\label{f3_f1.3}
\end{align}
\end{lma}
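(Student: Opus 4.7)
The plan is to derive the dissection by combining two of the previously stated 2-dissections. The key observation is the factorization
\[
\frac{f_3}{f_1^3} \;=\; \frac{1}{f_1 f_3}\cdot\frac{f_3^2}{f_1^2},
\]
which lets me invoke equations \eqref{1byf1f3} and \eqref{f3.2_f1.2}. Writing the two right-hand sides schematically as $A + qB$ and $C + 2qD$ respectively—where $A,B,C,D$ are eta-quotients in $f_2,f_4,f_6,f_8,f_{12},f_{24}$ and hence power series in $q^2$—I would expand
\[
(A+qB)(C+2qD) \;=\; (AC + 2q^2 BD) + q\,(2AD + BC),
\]
which is already an explicit 2-dissection of $f_3/f_1^3$, since the first bracket is a power series in $q^2$ and the second is $q$ times a power series in $q^2$.

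The remaining task is then to collapse this into the compact form on the right-hand side of the claimed identity. Concretely, I would need to verify the two eta-quotient identities
\[
AC + 2q^2 BD \;=\; \frac{f_4^6 f_6^3}{f_2^9 f_{12}^2}, \qquad 2AD + BC \;=\; 3\cdot\frac{f_4^2 f_6 f_{12}^2}{f_2^7}.
\]
Each is, after substituting $q^2\mapsto q$ and clearing a common denominator, an equation between holomorphic eta-products. I would first attempt a direct reduction using standard theta-function relations (such as $\varphi(-q)=f_1^2/f_2$, $\psi(q)=f_2^2/f_1$, and the cubic identities of Hirschhorn), but if this manipulation proves unwieldy I would instead interpret both sides as modular forms on $\Gamma_0(24)$ with a common nebentypus and invoke the Sturm bound to reduce equality to the comparison of finitely many Fourier coefficients.

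The main obstacle I anticipate is exactly this collapse from level-$24$ intermediate expressions (which carry $f_8$ and $f_{24}$) down to the level-$12$ form on the right. The appearance of the coefficient $3$ on the odd part, materialising as $2+1$ from the contributions $2AD$ and $BC$, signals that the cancellation between those two terms is delicate: it is the point at which a routine verification is most likely to misalign an exponent, and so it is the step I would carry out most carefully. The modular-forms route provides a guaranteed fallback at the cost of a finite but potentially lengthy coefficient check.
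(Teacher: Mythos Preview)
The paper does not supply a proof of this lemma at all; it simply quotes the identity from Baruah and Ojah \cite{NDBOJH} and uses it as input. So there is nothing in the paper to compare your argument against.

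On its own merits your plan is sound. The factorisation $f_3/f_1^3 = (1/f_1f_3)\cdot(f_3^2/f_1^2)$ together with \eqref{1byf1f3} and \eqref{f3.2_f1.2} does produce a genuine 2-dissection, and since the 2-dissection of a power series is unique, the two auxiliary eta-quotient identities you isolate are automatically true; the only question is how cleanly they can be \emph{verified}. Your Sturm-bound fallback is legitimate (one minor correction: after the substitution $q^2\mapsto q$ the surviving eta-factors are $f_1,f_2,f_3,f_4,f_6,f_{12}$, so the relevant level is $12$, not $24$, which actually shortens the coefficient check). If you prefer an elementary route that avoids the level-$24$ detour entirely, note that the standard derivations in the literature tend to stay at level $12$ throughout---for instance by combining the dissection \eqref{f3.3_f1} of $f_3^3/f_1$ with the Borweins' cubic theta relation $a(q)=a(q^4)+6q\,\psi(q^2)\psi(q^6)$, or by applying $q\mapsto -q$ to a companion identity---so that no $f_8$ or $f_{24}$ ever appears and the collapse you worry about is unnecessary.
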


\begin{lma} In \cite{hirschhorn2010elementary}, the following 2-dissections holds 
\begin{equation} \label{f5_f1}
\frac{f_5}{f_1} =\frac{f_8 f_{20}^2}{f_2^2 f_{40}}
+q\frac{f_4^3  f_{10} f_{40}}{f_2^3 f_8 f_{20}}.
\end{equation}
\begin{equation}\label{f1_f5}
    \frac{f_1}{f_5}=\frac{f_2 f_8 f_{20}^3}{f_4 f_{10}^2 f_{40}}-q\frac{f_4^2f_{40}}{f_8 f_{10}^2}.
\end{equation}
\end{lma}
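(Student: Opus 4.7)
Both identities in this lemma are the classical 2-dissections of Hirschhorn \cite{hirschhorn2010elementary}, and the plan is to prove them by his standard parity-split technique. I would first rewrite
$$\frac{f_5}{f_1} = \frac{(q^5;q^{10})_\infty}{(q;q^2)_\infty}\cdot\frac{f_{10}}{f_2}$$
by extracting the odd-index Pochhammer factors from both $f_5$ and $f_1$. The ratio $(q^5;q^{10})_\infty/(q;q^2)_\infty$ is the generating function for partitions into odd parts coprime to $5$. Applying the Jacobi triple product to both numerator and denominator and then splitting the resulting bilateral theta series by the parity of the summation index yields an expression of the form $C(q^2) + q\,D(q^2)$ with $C, D$ explicit theta-function quotients. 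Converting these back to the $f_r$ notation via the standard evaluations
$$\varphi(-q) = \frac{f_1^2}{f_2},\quad \psi(q) = \frac{f_2^2}{f_1},\quad \psi(-q) = \frac{f_1 f_4}{f_2},$$
and multiplying through by $f_{10}/f_2$, should produce the two summands on the right-hand side of \eqref{f5_f1}.

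For \eqref{f1_f5}, I would rerun the same procedure on the reciprocal ratio
$$\frac{f_1}{f_5} = \frac{(q;q^2)_\infty}{(q^5;q^{10})_\infty}\cdot\frac{f_2}{f_{10}}$$
rather than attempt to invert \eqref{f5_f1} algebraically (which would introduce the awkward denominator $A(q^2)^2 - q^2 B(q^2)^2$ and obscure the closed-form result). The minus sign in \eqref{f1_f5} arises naturally from the sign flip that appears when the numerator and denominator exchange roles in the parity split.

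The main obstacle is purely algebraic bookkeeping: verifying that the theta-function quotients produced by the parity split do collapse to the very specific products of $f_2, f_4, f_8, f_{10}, f_{20}, f_{40}$ recorded in the lemma. This requires repeated application of condensation identities such as $\psi(q)\psi(-q) = \psi(q^2)\varphi(-q^2)$ and $\varphi(q)\varphi(-q) = \varphi(-q^2)^2$ to fold composite theta products into single $f_r$ factors, together with careful tracking of the exponents of $q$ on both sides. Should this elementary route become unmanageable, an alternative is to observe that both sides of each identity are modular functions on $\Gamma_0(40)$ holomorphic away from the cusp $\infty$, so an appeal to Sturm's bound reduces each identity to a finite coefficient comparison, which is mechanical.
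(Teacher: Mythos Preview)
The paper does not actually prove this lemma: it is recorded in the Preliminaries section with a bare citation to \cite{hirschhorn2010elementary} and no argument is given. So there is no ``paper's own proof'' to compare against.

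Your sketch is a legitimate outline of how Hirschhorn establishes such 2-dissections, and the factorisation
\[
\frac{f_5}{f_1}=\frac{(q^5;q^{10})_\infty}{(q;q^2)_\infty}\cdot\frac{f_{10}}{f_2}
\]
is correct. The parity-split of the relevant theta series, followed by rewriting everything in the $f_r$ notation using $\varphi(-q)=f_1^2/f_2$, $\psi(q)=f_2^2/f_1$, etc., is exactly the right strategy and will produce \eqref{f5_f1} and \eqref{f1_f5}. The only caveat is that your proposal remains a sketch: the step ``Applying the Jacobi triple product \ldots and then splitting the resulting bilateral theta series by the parity of the summation index'' hides the substantive computation, and you would still need to exhibit which specific theta series you are dissecting and carry out the product simplifications explicitly. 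Since the paper is content to quote the result, your level of detail already exceeds what the authors provide; but if you want a self-contained proof you will need to fill in that algebra rather than defer it to ``bookkeeping.'' The Sturm-bound fallback you mention is also valid and would certainly work on $\Gamma_0(40)$, though it is heavier machinery than the identities warrant.
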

\begin{lma} \cite[equation 3.1]{toh2012ramanujan}, the following 3-dissection holds
    \begin{equation}\label{f2.3_f1.3}
       \frac{f_2^3}{f_1^3} =\frac{f_6}{f_3}+3q
       \frac{f_6^4 f_9^5}{f_3^8 f_{18}}
        +6q^2\frac{f_6^2 f_9^2 f_{18}^2 }{f_3^7} 
        +12q^3\frac{f_6^2 f_{18}^5}{f_3^6 f_9}.
    \end{equation}
\end{lma}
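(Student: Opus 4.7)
The plan is to prove the 3-dissection of $f_2^3/f_1^3$ by working with Ramanujan's $\psi$-function. First observe that $\psi(q) = f_2^2/f_1$ yields the reformulation $f_2^3/f_1^3 = \psi(q)^3/f_2^3$. Since the triangular numbers $n(n+1)/2$ never fall in the residue class $2 \pmod 3$, splitting the sum $\psi(q)=\sum_{n\ge 0} q^{n(n+1)/2}$ according to $n\bmod 3$ gives the elementary dissection $\psi(q) = f(q^3, q^6) + q\psi(q^9)$, and the Jacobi triple product supplies the closed forms $f(q^3, q^6) = f_6 f_9^2/(f_3 f_{18})$ and $\psi(q^9) = f_{18}^2/f_9$.

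Cubing this two-term expression produces
\[
\psi(q)^3 = \frac{f_6^3 f_9^6}{f_3^3 f_{18}^3} + 3q\,\frac{f_6^2 f_9^3}{f_3^2} + 3q^2\,\frac{f_6 f_{18}^3}{f_3} + q^3\,\frac{f_{18}^6}{f_9^3},
\]
with the four summands carrying $q$-prefactors $q^0, q^1, q^2, q^3$. Dividing by $f_2^3$ and expanding $1/f_2^3$ via its own 3-dissection---obtainable by substituting $q\mapsto q^2$ in the classical 3-dissection of $1/f_1^3$, which in turn follows from the Borwein cubic identity $a(q)^3 = b(q)^3 + c(q)^3$ with $b(q) = f_1^3/f_3$ and $c(q) = 3q^{1/3} f_3^3/f_1$---should redistribute the four pieces of $\psi(q)^3$ into three residue classes modulo $3$, with the $q^0$ and $q^3$ contributions both landing in residue $0$. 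Standard eta-quotient simplifications then yield the four summands on the right-hand side.

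The main obstacle is the combinatorial bookkeeping: matching the coefficients $1, 3, 6, 12$ exactly requires verifying several eta-quotient identities, since the binomial coefficients $\binom{3}{k}$ from cubing $\psi(q)$ interact nontrivially with the coefficients arising in the 3-dissection of $1/f_2^3$. A cleaner but less combinatorial alternative is to clear denominators (multiplying both sides by $f_1^3 f_3^8 f_9 f_{18}$) to obtain an identity of eta products of fixed weight on a congruence subgroup $\Gamma_0(N)$ with $N$ dividing $72$; the Sturm bound then reduces the proof to the verification of finitely many Fourier coefficients, a routine but mechanical computation.
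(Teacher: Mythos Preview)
The paper does not prove this lemma at all; it is simply quoted from Toh's paper \cite{toh2012ramanujan} as a known identity, so there is no ``paper's own proof'' to compare your attempt against.

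As for your sketch itself: the reformulation $f_2^3/f_1^3=\psi(q)^3/f_2^3$ and the two-term 3-dissection $\psi(q)=f(q^3,q^6)+q\,\psi(q^9)$ are correct, and so is the observation that substituting $q\mapsto q^2$ into a 3-dissection of $1/f_1^3$ yields a 3-dissection of $1/f_2^3$ (the residues $0,2,4\bmod 6$ cover $0,2,1\bmod 3$). However, you stop precisely at the hard step. Multiplying your four-term expansion of $\psi(q)^3$ by a three-term expansion of $1/f_2^3$ produces twelve cross terms, each an eta quotient in $q^3$ and $q^6$, and you would still have to collapse these twelve into the four specific eta quotients on the right-hand side with the exact coefficients $1,3,6,12$. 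That collapse is not automatic: it requires several auxiliary eta identities (essentially the Borwein cubic relations), and you have not supplied or even identified them. As written, your first route is a plan, not a proof.

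Your fallback via the Sturm bound is sound and would genuinely close the argument: after clearing denominators both sides become holomorphic modular forms of the same weight on some $\Gamma_0(N)$ with $N\mid 72$, and a finite coefficient check suffices. If you want a self-contained proof, that is the route to actually execute; the combinatorial route needs substantially more detail before it can be called complete.
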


\begin{lma}In \cite{hirschhorn2014congruence}, the following 3-dissection holds  
\begin{equation} \label{f1f2}
f_1f_2=\frac{f_6f_9^4}{f_3f_{18}^2}-qf_9f_{18}-2q^2\frac{f_3f_{18}^4}{f_6f_9^2}.
\end{equation}
\end{lma}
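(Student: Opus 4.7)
The plan is to reduce the claimed $3$-dissection of $f_1 f_2$ to the classical $3$-dissections of Ramanujan's theta functions $\psi(q)$ and $\varphi(-q)$. The starting point is the elementary factorisation
$$f_1 f_2 \;=\; \frac{f_1^{2}}{f_2}\cdot\frac{f_2^{2}}{f_1} \;=\; \varphi(-q)\,\psi(q),$$
which uses only the product formulas $\varphi(-q)=f_1^2/f_2$ and $\psi(q)=f_2^2/f_1$ recorded in the preliminaries.

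I would then invoke the standard $3$-dissections
$$\psi(q) = f(q^3,q^6) + q\,\psi(q^9), \qquad \varphi(-q) = \varphi(-q^9) - 2q\,f(-q^3,-q^{15}),$$
each of which is obtained by splitting Ramanujan's series $\sum_{n\ge 0} q^{n(n+1)/2}$ and $\sum_{n\in\mathbb{Z}}(-1)^n q^{n^2}$ according to the residue of $n$ modulo~$3$ and recognising the resulting subseries as Ramanujan theta functions in base~$q^3$. Multiplying the two dissections and collecting powers of $q$ modulo~$3$ yields
\begin{align*}
f_1 f_2 &= \varphi(-q^9)\,f(q^3,q^6) + q\bigl[\varphi(-q^9)\psi(q^9) - 2f(-q^3,-q^{15})f(q^3,q^6)\bigr]\\
&\quad {}- 2q^{2}\,f(-q^3,-q^{15})\,\psi(q^9),
\end{align*}
which is a genuine $3$-dissection since every factor on the right is a series in $q^3$.

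What remains is to convert the four Ramanujan theta functions above into ratios of $f_r$'s via Jacobi's triple product. Routine manipulations give
$$\varphi(-q^9)=\frac{f_9^{2}}{f_{18}},\qquad \psi(q^9)=\frac{f_{18}^{2}}{f_9},\qquad f(q^3,q^6)=\frac{f_6 f_9^{2}}{f_3 f_{18}},\qquad f(-q^3,-q^{15})=\frac{f_3 f_{18}^{2}}{f_6 f_9}.$$
Substituting these into the three bracketed expressions, the outer coefficients collapse immediately to $f_6 f_9^{4}/(f_3 f_{18}^{2})$ and $-2\,f_3 f_{18}^{4}/(f_6 f_9^{2})$, exactly matching the first and third terms on the right-hand side of the identity.

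The one spot where care is needed is the middle coefficient. After substitution, both $\varphi(-q^9)\psi(q^9)$ and $f(-q^3,-q^{15})\cdot f(q^3,q^6)$ independently simplify to $f_9 f_{18}$, so the bracket becomes $f_9 f_{18} - 2 f_9 f_{18} = -f_9 f_{18}$, producing the clean middle term $-q\,f_9 f_{18}$. This two-to-one cancellation is the only non-obvious step of the argument and also serves as a built-in consistency check on the four triple-product computations. So there is no genuine obstacle beyond the triple-product bookkeeping that produces the $f_r$-representations above.
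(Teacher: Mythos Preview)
Your proof is correct and complete: the factorisation $f_1 f_2=\varphi(-q)\psi(q)$, the two classical $3$-dissections, the four triple-product evaluations, and the cancellation $f_9f_{18}-2f_9f_{18}=-f_9f_{18}$ in the middle coefficient all check out exactly as you describe.

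The paper, however, offers no proof of its own for this lemma; it simply quotes the identity from Hirschhorn and Sellers~\cite{hirschhorn2014congruence}. So there is nothing to compare against in the present manuscript. What you have written is in fact the standard derivation (and essentially the one Hirschhorn--Sellers give): writing $f_1f_2$ as $\varphi(-q)\psi(q)$ and multiplying the well-known $3$-dissections of the two factors. Your sketch therefore supplies a self-contained argument where the paper only provides a citation, which is a genuine improvement in exposition; the reader no longer has to chase the reference to see why the identity holds.
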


\begin{lma}\cite[Corollary of Entry 31]{berndt2012ramanujan} We have 
    \begin{equation}\label{f2.2_f1}
        \frac{f_2^2}{f_1}=\frac{f_6f_9^2}{f_3f_{18}}+q\frac{f_{18}^2}{f_9}.
    \end{equation}
\end{lma}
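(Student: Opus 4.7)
The plan is to prove this 3-dissection of $\psi(q) = f_2^2/f_1$ directly from the series definition
$$\psi(q) = \sum_{n=0}^{\infty} q^{n(n+1)/2}$$
by splitting the index $n$ according to its residue modulo $3$. The key observation is that the triangular number $n(n+1)/2$ is congruent to $0 \pmod 3$ when $n \equiv 0$ or $2 \pmod 3$, and congruent to $1 \pmod 3$ when $n \equiv 1 \pmod 3$, so the two terms on the right will correspond naturally to these two cases.

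For $n = 3k+1$ with $k \geq 0$, the exponent factors as $(3k+1)(3k+2)/2 = 1 + 9k(k+1)/2$, so that subsum equals $q \sum_{k \geq 0} q^{9k(k+1)/2} = q\psi(q^9) = q f_{18}^2/f_9$, giving the second term on the right. For residues $0$ and $2$ mod $3$, I would reindex: the $n = 3k$ subsum contributes $\sum_{k \geq 0} q^{(9k^2+3k)/2}$, and the $n = 3k+2$ subsum, after the shift $\ell = k+1$, contributes $\sum_{\ell \geq 1} q^{(9\ell^2-3\ell)/2}$. Substituting $j = -k$ in the first converts it to $\sum_{j \leq 0} q^{(9j^2-3j)/2}$, which combines with the second to form the bilateral sum $\sum_{j \in \mathbb{Z}} q^{(9j^2-3j)/2}$. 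Comparing exponents with the definition $f(a,b) = \sum_n a^{n(n+1)/2}b^{n(n-1)/2}$ and solving $\log a + \log b = 9\log q$, $\log a - \log b = -3\log q$ identifies this bilateral sum as $f(q^3, q^6)$.

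The last step is to identify $f(q^3, q^6)$ as an infinite product matching $f_6 f_9^2/(f_3 f_{18})$. Applying Jacobi's triple product in the form $f(a,b) = (-a;ab)_\infty(-b;ab)_\infty(ab;ab)_\infty$ with $(a,b) = (q^3, q^6)$, then rewriting each factor via $(-q^r;q^s)_\infty = (q^{2r};q^{2s})_\infty/(q^r;q^s)_\infty$, and finally collapsing products using the telescoping identities $(q^3;q^9)_\infty(q^6;q^9)_\infty(q^9;q^9)_\infty = f_3$ and $(q^6;q^{18})_\infty(q^{12};q^{18})_\infty(q^{18};q^{18})_\infty = f_6$, yields $f(q^3,q^6) = f_6 f_9^2/(f_3 f_{18})$, which is the first term on the right.

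The main obstacle, though essentially bookkeeping, is the reindexing step that glues the two one-sided sums over $n \equiv 0$ and $n \equiv 2 \pmod 3$ into a single bilateral sum; one must verify that the shift $\ell = k+1$ together with the reflection $j = -k$ produce a partition of $\mathbb{Z}$ with no missing or repeated terms. A cleaner alternative is to bypass the series manipulation entirely by invoking the general $n$-dissection formula for $f(a,b)$ stated as Entry 31 in Berndt's book with $n = 3$ and $(a,b) = (q, q^3)$, from which the stated identity falls out as an immediate corollary; this is the route taken in the cited reference.
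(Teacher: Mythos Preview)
Your argument is correct. The paper does not supply its own proof of this lemma; it simply records the identity and cites Berndt's \emph{Ramanujan's Notebooks, Part III} (Corollary of Entry~31). So there is nothing in the paper to compare against beyond the citation itself.

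Your approach---splitting the triangular-number series for $\psi(q)$ according to $n\bmod 3$, identifying the $n\equiv 1$ piece as $q\psi(q^9)$, gluing the $n\equiv 0$ and $n\equiv 2$ pieces into the bilateral sum $f(q^3,q^6)$, and then reducing the latter to $f_6 f_9^2/(f_3 f_{18})$ via the Jacobi triple product---is the standard elementary derivation, and each step checks out (in particular the reindexing $j=-k$ on the $n=3k$ part together with $\ell=k+1$ on the $n=3k+2$ part does partition $\mathbb{Z}$ with no repetitions). Your closing remark that Entry~31 gives the identity directly as the $n=3$ case of the general dissection of $f(a,b)$ is exactly what the cited reference does; your hands-on computation simply unwinds that special case.
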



\begin{lma} \cite[p. 212]{ramanujan1927} We have
     \begin{equation} \label{f_1_dissection}
        f_1=f_{25}\left( R(q^5)^{-1}-q-q^2 R(q^5)\right),
    \end{equation}
    where
    \begin{large}
        $R(q)= \frac{(q;q^5)_\infty (q^4;q^5)_\infty}{(q^2;q^5)_\infty (q^3;q^5)_\infty}.\\$ 
    \end{large}
\end{lma}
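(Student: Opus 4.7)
The plan is to prove this classical $5$-dissection of $f_1$ due to Ramanujan by expanding the pentagonal number series and grouping its terms by residue modulo $5$. Starting from Euler's pentagonal number theorem, write
\begin{equation*}
f_1 = \sum_{n \in \mathbb{Z}} (-1)^n q^{n(3n-1)/2}.
\end{equation*}
A routine check shows that $n(3n-1)/2 \pmod 5$ depends only on $n \pmod 5$ and takes the values $0,1,0,2,2$ as $n \equiv 0,1,2,3,4 \pmod 5$. Consequently $f_1$ contains no terms in $q^{5k+3}$ or $q^{5k+4}$, and we may write
\begin{equation*}
f_1 = A(q^5) + q\,B(q^5) + q^2\,C(q^5)
\end{equation*}
for power series $A$, $B$, $C$ in $q^5$ that I will identify.

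Substituting $n = 5k+1$ shows that the single residue class $r = 1$ gives $q\,B(q^5) = -q\sum_{k \in \mathbb{Z}}(-1)^k q^{25k(3k+1)/2}$. By Jacobi's triple product the inner sum equals $f(-q^{50},-q^{25}) = (q^{25};q^{75})_\infty (q^{50};q^{75})_\infty (q^{75};q^{75})_\infty$, and the $3$-dissection of $(q^{25};q^{25})_\infty$ according to the residue mod $3$ of the exponent recovers precisely $f_{25}$. Hence $B(q^5) = -f_{25}$, which matches the middle term of the claimed identity.

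Next, the classes $r = 0,2$ together contribute $A(q^5)$, and the classes $r = 3,4$ together contribute $q^2 C(q^5)$; a priori each of these is a sum of two bilateral theta-type series with different quadratic exponents. The key step is to collapse each such pair into a quotient of theta products. Applying the quintuple product identity (or equivalently, two applications of Jacobi's theta addition formula) yields
\begin{equation*}
A(q^5) = \frac{f_{25}\,f(-q^{10},-q^{15})}{f(-q^5,-q^{20})} = \frac{f_{25}}{R(q^5)}, \qquad C(q^5) = -\frac{f_{25}\,f(-q^5,-q^{20})}{f(-q^{10},-q^{15})} = -f_{25}\,R(q^5),
\end{equation*}
where the final equalities use $f(-q^5,-q^{20}) = (q^5;q^{25})_\infty(q^{20};q^{25})_\infty f_{25}$ and the analogous expansion of $f(-q^{10},-q^{15})$ to match the product definition of $R(q)$. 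Combining the three pieces gives $f_1 = f_{25}\!\left(R(q^5)^{-1} - q - q^2 R(q^5)\right)$, as claimed.

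The main obstacle is the collapsing step in the third paragraph: recognizing each two-term theta sum as a single ratio of theta products. Everything else is essentially bookkeeping around the pentagonal series and Jacobi's triple product, whereas this step genuinely requires the quintuple product identity and is the only non-elementary ingredient of the proof.
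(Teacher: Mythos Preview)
The paper does not give its own proof of this lemma: it simply records the identity with the citation \cite[p.~212]{ramanujan1927} and uses it as a black box. So there is nothing in the paper to compare your argument against, and any correct proof you supply already goes beyond what the paper does.

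Your outline is the standard derivation of Ramanujan's $5$-dissection of $f_1$ and is sound. A few minor points: the computation of $B(q^5)$ is fine, but the sentence about ``the $3$-dissection of $(q^{25};q^{25})_\infty$'' is an oddly roundabout way of saying that $(q^{25};q^{75})_\infty(q^{50};q^{75})_\infty(q^{75};q^{75})_\infty = (q^{25};q^{25})_\infty = f_{25}$; you could just state that directly. The genuine work, as you correctly flag, is collapsing the two pairs of theta series into $f_{25}/R(q^5)$ and $-f_{25}R(q^5)$. You attribute this to the quintuple product identity, which is one valid route; another standard route (and the one closer to Ramanujan's own) is to verify the two product identities
\[
\sum_{k\in\mathbb{Z}}(-1)^k q^{5k(15k-1)/2} + \sum_{k\in\mathbb{Z}}(-1)^k q^{5(5k+2)(3k+1)/2} = f(-q^{10},-q^{15})
\]
and its companion directly via Jacobi's triple product after an index shift, without invoking the full quintuple product. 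Either way the step is not elementary bookkeeping, and your proposal would be strengthened by writing out at least one of the two collapses in full rather than asserting it.
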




\section{Congruences for $b^{2}_{3, 2t}(n)$ and $b^{4}_{3, 4t}(n)$}\label{s3}

\begin{thm}
For any positive integer $n$ and $3 \nmid t$, we have
\begin{align}
b^{2}_{3,2t}(4n+3) \equiv 0 \pmod{4}.\label{tm1}
\end{align}
\end{thm}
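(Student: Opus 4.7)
The plan is to apply the 2-dissection \eqref{f3.2_f1.2} of $f_3^2/f_1^2$ to the generating function
$$\sum_{n\geq 0} b^2_{3,2t}(n)\,q^n \;=\; \frac{f_3^2}{f_1^2}\cdot \frac{f_{2t}^2}{f_{6t}^2}$$
and then exploit the fact that the second factor is automatically a series in $q^2$. Writing $H(q):=f_t^2/f_{3t}^2$ so that $f_{2t}^2/f_{6t}^2 = H(q^2)$, the first term of \eqref{f3.2_f1.2} is itself a series in $q^2$, while the second carries an explicit $2q$ out front. Extracting the odd powers of $q$ therefore isolates the second term and already produces one factor of $2$:
$$\sum_{n\geq 0} b^2_{3,2t}(2n+1)\,q^n \;=\; 2\cdot \frac{f_2 f_3^2 f_4 f_{12} f_t^2}{f_1^4 f_6 f_{3t}^2},$$
where the substitution $q^2\mapsto q$ has been made (sending $f_{2k}\mapsto f_k$).

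The second step is to harvest a further factor of $2$ by a modulo-$2$ reduction. Using the standard congruence $f_a^{2}\equiv f_{2a}\pmod 2$ (whence $f_1^4\equiv f_4\pmod 2$), the right-hand side collapses modulo $2$ to
$$\frac{f_2\,f_6\,f_4\,f_{12}\,f_{2t}}{f_4\,f_6\,f_{6t}} \;=\; \frac{f_2\,f_{12}\,f_{2t}}{f_{6t}},$$
which is manifestly a power series in $q^2$ (every $f_j$ present has even subscript). Hence every odd-indexed coefficient of the series in the previous display is even, and specializing $n=2m+1$ yields $b^2_{3,2t}(4m+3)\equiv 0\pmod 4$, which is \eqref{tm1}.

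No serious obstacle is expected: the argument is a single dissection followed by an elementary mod-$2$ cancellation. The only modest care required is in the substitution $q^2\mapsto q$ during the odd-part extraction, and in checking that the surviving quotient is genuinely a series in $q^2$. Notably the hypothesis $3\nmid t$ plays no role in the algebra itself; it is used only to guarantee $\gcd(3,2t)=1$ so that $b^2_{3,2t}(n)$ is a well-defined $(3,2t)$-regular colored partition function in the sense of the introduction.
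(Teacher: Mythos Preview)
Your proposal is correct and follows essentially the same route as the paper: apply the 2-dissection \eqref{f3.2_f1.2}, extract the odd part (picking up the explicit factor $2$), and then reduce modulo $2$ via $f_a^2\equiv f_{2a}$ to see that what remains is a series in $q^2$. The only cosmetic difference is that your reduced expression is $\dfrac{f_2 f_{12} f_{2t}}{f_{6t}}$ while the paper arrives at $\dfrac{f_4 f_{12} f_{2t}}{f_2 f_{6t}}$; these are congruent modulo $2$ and both lie in $\mathbb{Z}[[q^2]]$, so the conclusion is the same.
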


\begin{proof}
Suppose $3 \nmid t$, substituting $\ell=3$, $m=2t$ and $k=2$ in \eqref{main}, we have

\begin{equation} \label{tm1_0}
\displaystyle \sum_{n=0}^\infty b^{2}_{3,2t}(n)q^n = \frac{f_3^2 f_{2t}^2}{f_1^2 f_{6t}^2}.
\end{equation}
Substituting \eqref{f3.2_f1.2} in  \eqref{tm1_0}, we get
\begin{align}
\displaystyle \sum_{n=0}^\infty b^{2}_{3,2t}(n) q^n 
&= \frac{f_4^4 f_6 f_{12}^2 f_{2t}^2}{f_2^5 f_8 f_{24} f_{6t}^2}
+2q\frac{f_4 f_6^2 f_8 f_{24} f_{2t}^2}{f_2^4 f_{12} f_{6t}^2}. \label{tm1_1}
\end{align}
By binomial theorem, for positive integer $j$, we have 
\begin{align}\label{bt2}
    f_j^2 \equiv f_{2j} \pmod{2}.
\end{align}
Extracting $q^{2n+1}$ terms from \eqref{tm1_1}, we get
\begin{align}
\displaystyle \sum_{n=0}^\infty b^{2}_{3,2t}(2n+1) q^{2n+1} 
& = 2q\frac{f_4 f_6^2 f_8 f_{24} f_{2t}^2}{f_2^4 f_{12} f_{6t}^2}, \nonumber\\
& \equiv  2q\frac{f_4^2 f_6^2 f_8 f_{24} f_{2t}^2}{f_4 f_2^4 f_{12} f_{6t}^2} \pmod{4}, \label{tm1_2}
\end{align}
which implies that 
\begin{equation} \label{tm1_4} 
\displaystyle \sum_{n=0}^\infty b^{2}_{3,2t}(2n+1) q^{n} 
\equiv  2\frac{ f_3^2 f_4 f_{12} f_{t}^2}{f_2  f_{6} f_{3t}^2} \pmod{4}.
\end{equation}
Invoking \eqref{bt2} in \eqref{tm1_4}, we have 
\begin{align}\label{l1}
\displaystyle \sum_{n=0}^\infty b^{2}_{3,2t}(2n+1) q^{n} 
& \equiv   2\frac{f_4 f_{12} f_{2t} }{f_2  f_{6t} }\pmod{4}.
\end{align}
The equation \eqref{tm1} follows from \eqref{l1}.
\end{proof}

\begin{thm} For any positive integer $n$, we have
\begin{align}
b^{2}_{3,4}(2n) \equiv b_{3,4}(n) \pmod{4}.\label{tm2}
\end{align} 
\end{thm}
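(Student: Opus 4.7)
The plan is to extract the even-indexed coefficients from $\sum b^2_{3,4}(n)q^n$ and match the resulting $q$-series to $\sum b_{3,4}(n)q^n$ modulo $4$, reusing the dissection work already performed in the preceding theorem. Specializing \eqref{main} with $\ell=3$, $m=4$, $k=2$ yields $\sum b^2_{3,4}(n)q^n = f_3^2 f_4^2/(f_1^2 f_{12}^2)$. Substituting the $2$-dissection \eqref{f3.2_f1.2} of $f_3^2/f_1^2$ and simplifying, exactly as in the $t=2$ case of \eqref{tm1_1}, I expect
\[
\sum_{n=0}^\infty b^{2}_{3,4}(n) q^n = \frac{f_4^6 f_6}{f_2^5 f_8 f_{24}} + 2q\,\frac{f_4^3 f_6^2 f_8 f_{24}}{f_2^4 f_{12}^3}.
\]
Only the first summand contributes to even powers of $q$, so extracting the $q^{2n}$ part and replacing $q^2\mapsto q$ gives
\[
\sum_{n=0}^\infty b^{2}_{3,4}(2n)\, q^n = \frac{f_2^6 f_3}{f_1^5 f_4 f_{12}} = \frac{f_2^6}{f_1^4 f_4^2}\cdot \frac{f_3 f_4}{f_1 f_{12}} = \frac{f_2^6}{f_1^4 f_4^2}\sum_{n=0}^\infty b_{3,4}(n)q^n.
\]

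With this in hand, the theorem reduces to the formal power series identity $f_2^6 \equiv f_1^4 f_4^2 \pmod 4$. To prove it I will use a mod $4$ refinement of the congruence \eqref{bt2}, namely $f_n^4 \equiv f_{2n}^2 \pmod 4$ for every positive integer $n$. This follows by writing $f_n^2 = f_{2n} + 2A_n(q)$ with $A_n\in\mathbb{Z}[[q]]$ (possible by \eqref{bt2}) and squaring:
\[
f_n^4 = f_{2n}^2 + 4 f_{2n} A_n + 4 A_n^2 \equiv f_{2n}^2 \pmod 4.
\]
Applying this with $n=1$ and $n=2$ respectively gives $f_1^4 f_4^2 \equiv f_2^2 f_4^2 \pmod 4$ and $f_2^6 = f_2^4\cdot f_2^2 \equiv f_4^2\cdot f_2^2 \pmod 4$, so the two sides agree. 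Hence $\sum b^2_{3,4}(2n)q^n \equiv \sum b_{3,4}(n)q^n \pmod 4$, and comparing coefficients of $q^n$ delivers \eqref{tm2}.

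The dissection and term-extraction step is routine and essentially copy-pasted from the preceding proof; the only step I see as requiring real attention is the mod $4$ lift $f_n^4 \equiv f_{2n}^2 \pmod 4$, which is precisely the mechanism that converts the ``squared'' partition generating function into the unsquared one modulo $4$.
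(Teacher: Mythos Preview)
Your proof is correct and follows essentially the same route as the paper: apply the $2$-dissection \eqref{f3.2_f1.2}, extract the even part, and reduce modulo $4$ using $f_n^4\equiv f_{2n}^2\pmod 4$ to land on the generating function of $b_{3,4}(n)$. The paper carries out the simplification in the variable $q^2$ and only sends $q^2\mapsto q$ at the end, whereas you switch variables first and then factor out $\dfrac{f_3 f_4}{f_1 f_{12}}$ directly; these are cosmetic differences. Your explicit derivation of $f_n^4\equiv f_{2n}^2\pmod 4$ from \eqref{bt2} is a welcome clarification, since the paper invokes \eqref{bt2} (a mod~$2$ statement) while working mod~$4$ without spelling out this lift.
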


\begin{proof}
Substituting $t=2$ in \eqref{tm1_0}, we have 
\begin{equation}\label{tm2_0}
\displaystyle \sum_{n=0}^\infty b^{2}_{3,4}(n)q^n = \frac{f_3^2 f_{4}^2}{f_1^2 f_{12}^2}.
\end{equation}
Substituting \eqref{f3.2_f1.2} in \eqref{tm2_0}, we have 
\begin{align}
\displaystyle \sum_{n=0}^\infty b^{2}_{3,4}(n) q^n
&= \frac{f_4^6 f_6 }{f_2^5 f_8 f_{24} }
+2q\frac{f_4^3 f_6^2 f_8 f_{24}}{f_2^4  f_{12}^3}, \label{tm2_1}
\end{align}
which implies that 
\begin{align}
\displaystyle \sum_{n=0}^\infty b^{2}_{3,4}(2n) q^{2n} 
&= \frac{f_4^6 f_6 }{f_2^5 f_8 f_{24} }, \nonumber \\
& \equiv \frac{f_4^2 f_4^4 f_6 }{f_2^4 f_2 f_8 f_{24} } \pmod{4}. \label{tm2_4}
\end{align}
Utilizing \eqref{bt2} in \eqref{tm2_4}, we find that 
\begin{align}
\displaystyle \sum_{n=0}^\infty b^{2}_{3,4}(2n) q^{2n} 
& \equiv \frac{ f_4^4 f_6 }{ f_2 f_8 f_{24} } \pmod{4}, \nonumber \\
& \equiv \frac{ f_8 f_6 }{ f_2 f_{24} } \pmod{4}. \label{tm2_2}
\end{align}
Extracting the even powers of $q$ on both sides of the above equation, we get 
\begin{align}
\displaystyle \sum_{n=0}^\infty b^{2}_{3,4}(2n) q^{n} 
& \equiv \frac{ f_3 f_4 }{ f_1 f_{12} } \pmod{4}. \label{tm2_3}
\end{align}
The equation \eqref{tm2} 
follows from \eqref{tm2_3}.
\end{proof}

\begin{thm}
For any positive integer $n$, we have
    \begin{equation} 
        b^{2}_{3,4}(4n+1)\equiv 
        \begin{cases}
            2 \pmod{4} & \text{ if $n$ is a triangular number,}\\
            0 \pmod{4} & \text{ other wise.}
        \end{cases}
    \end{equation}
\end{thm}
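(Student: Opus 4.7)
The plan is to start from the congruence already proved in the previous theorem with the specialization $t=2$. Setting $t=2$ in equation \eqref{l1} gives
\[
\sum_{n=0}^\infty b^{2}_{3,4}(2n+1) q^{n} \equiv 2\,\frac{f_4\, f_{12}\, f_{4}}{f_2\, f_{12}} = 2\,\frac{f_4^{2}}{f_2} \pmod{4}.
\]
The key observation is that $f_4^{2}/f_2 = \psi(q^{2})$, where $\psi(q)=\sum_{n\ge 0}q^{n(n+1)/2}$ is the classical theta function recorded in the Preliminaries. Therefore
\[
\sum_{n=0}^\infty b^{2}_{3,4}(2n+1) q^{n} \equiv 2\,\psi(q^{2}) = 2\sum_{n=0}^{\infty} q^{n(n+1)} \pmod{4}.
\]

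Next, I would observe that every exponent $n(n+1)$ appearing on the right-hand side is even, so the odd powers of $q$ on the right vanish. Matching odd powers recovers the earlier congruence $b^{2}_{3,4}(4n+3)\equiv 0\pmod 4$, while matching the even powers $q^{2n}$ yields
\[
\sum_{n=0}^\infty b^{2}_{3,4}(4n+1)\, q^{2n} \equiv 2\sum_{n=0}^\infty q^{n(n+1)} \pmod{4},
\]
and replacing $q^{2}$ by $q$ gives
\[
\sum_{n=0}^\infty b^{2}_{3,4}(4n+1)\, q^{n} \equiv 2\sum_{n=0}^\infty q^{n(n+1)/2} \pmod{4}.
\]

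Finally, I would read off the coefficients: the right-hand side contributes $2$ to the coefficient of $q^{n}$ precisely when $n$ is a triangular number $k(k+1)/2$ for some $k\ge 0$, and $0$ otherwise. This gives exactly the two-case conclusion of the theorem.

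There is essentially no obstacle here beyond the bookkeeping of dissections: the previous theorem has already done the heavy lifting by isolating the $q^{2n+1}$-piece modulo $4$, and the specialization $t=2$ causes a fortuitous collapse of the right-hand side into a single theta function whose support is exactly the shifted triangular numbers. The only point that requires a brief justification is the identification $f_4^{2}/f_2 = \psi(q^{2})$, which is immediate from the definition of $\psi$ given in the Preliminaries.
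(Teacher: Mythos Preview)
Your proof is correct and follows essentially the same approach as the paper: specialize \eqref{l1} at $t=2$, recognize the resulting expression as $2\psi(q^{2})$ (equivalently, extract even powers and recognize $2\psi(q)$), and read off the triangular-number support. The only cosmetic difference is that the paper extracts even powers first and then identifies $2f_2^{2}/f_1=2\psi(q)$, whereas you identify $f_4^{2}/f_2=\psi(q^{2})$ before dissecting; the content is identical.
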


\begin{proof}
	Substituting $t=2$ in \eqref{l1}, we have
	\begin{align}\label{l11}
		\displaystyle \sum_{n=0}^\infty b^{2}_{3,4}(2n+1) q^{n} 
		& \equiv   2\frac{f^2_4 }{f_2}\pmod{4}.
	\end{align}
Extracting the even powers of $q$  on both sides of the above equation, we get
\begin{align}\label{l12}
	\displaystyle \sum_{n=0}^\infty b^{2}_{3,4}(4n+1) q^{n} 
	& \equiv   2\frac{f^2_2}{f_1} \equiv   2\psi(q) \pmod{4},
\end{align}
which implies that 
\begin{align}\label{l121}
	\displaystyle \sum_{n=0}^\infty b^{2}_{3,4}(4n+1) q^{n} 
	& \equiv  2\sum_{n=0}^{\infty}q^\frac{n(n+1)}{2}  \pmod{4}.
\end{align}
The result follows.
\end{proof}
\begin{cor}
Let $p \geq 5$ be prime and let $1 \leq r \leq p - 1$ with $8r + 1$ a quadratic nonresidue modulo $p$. Then for all $m \geq 0$,    
\end{cor}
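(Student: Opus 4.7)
The plan is to exploit equation \eqref{l12} of the preceding theorem, which reads
\begin{equation*}
\sum_{n=0}^\infty b^{2}_{3,4}(4n+1)\, q^{n} \equiv 2\psi(q) \pmod{4},
\end{equation*}
together with the classical series representation $\psi(q) = \sum_{k \geq 0} q^{k(k+1)/2}$. I would extract the coefficient of $q^{pm+r}$ from both sides and show that this coefficient in $\psi(q)$ vanishes under the quadratic-nonresidue hypothesis on $8r+1$.

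Concretely, I would first ask when a triangular number $T_k = k(k+1)/2$ can satisfy $T_k \equiv r \pmod{p}$. Multiplying through by $8$ and completing the square transforms this into the equivalent congruence $(2k+1)^2 \equiv 8r+1 \pmod{p}$. Since $p \geq 5$, the integer $8$ is a unit modulo $p$, so the two formulations are genuinely equivalent; hence $T_k \equiv r \pmod{p}$ admits a solution $k$ if and only if $8r+1$ is a quadratic residue modulo $p$.

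By hypothesis $\left(\tfrac{8r+1}{p}\right) = -1$, so no integer $k$ realizes $T_k \equiv r \pmod{p}$. Consequently the coefficient of $q^{pm+r}$ in $\psi(q)$ vanishes for every $m \geq 0$, and extracting this coefficient on both sides of the displayed congruence gives
\begin{equation*}
b^{2}_{3,4}\bigl(4pm + 4r + 1\bigr) \equiv 0 \pmod{4},
\end{equation*}
which is the intended conclusion. There is no serious obstacle: the argument reduces to the standard Legendre-symbol criterion for representability of a residue class by triangular numbers, and the single hypothesis on $8r+1$ handles all $m \geq 0$ uniformly.
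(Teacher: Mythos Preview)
Your proposal is correct and follows essentially the same route as the paper's proof: both start from the congruence $\sum_{n\ge0} b^{2}_{3,4}(4n+1)q^{n}\equiv 2\psi(q)\pmod{4}$, rewrite the triangular-number condition as the requirement that $8n+1$ be a perfect square (the paper does this by passing to $\sum_{k\ge0} q^{(2k+1)^2}$, you by explicitly completing the square), and then observe that $8(pm+r)+1\equiv 8r+1\pmod{p}$ is a nonresidue, hence not a square. Your write-up is in fact cleaner than the paper's, which contains a couple of typographical slips (it writes $b_{3,4}^{2}(n)$ where $b_{3,4}^{2}(4n+1)$ is meant, and $\pmod{8}$ where $\pmod{4}$ is meant).
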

\begin{equation}
    b_{3,4}^{2} (pm + r) \equiv 0 \pmod{4}.
\end{equation}

\begin{proof}
    We have
\begin{equation}
    \sum_{n=0}^{\infty} b_{3,4}^{2} (n) q^{8n+1} \equiv \sum_{k=0}^{\infty} q^{(2k+1)^2} \pmod{4}.
\end{equation}
Note that here, $n = pm + r$, so $8n+1 = 8pm+8r + 1 \equiv 8r +1 \pmod{p}$ is not a square modulo $p$. Thus, $8n + 1$ is not a square and $b_{3,4}^{2}(n) \equiv 0 \pmod{8}$. 
\end{proof} 
 \begin{thm} For any positive integer $n$, we have
     \begin{align}
         &b_{3,4}^2(6n+4) \equiv 0 \pmod{2}, \label{tmx1} \\
         & \displaystyle b^{2}_{3,4}(6n+2) q^{n} 
      \equiv b^3_3(n)\pmod{2},\label{tmx2}\\
    & b_{3,4}^2(12n+6) \equiv 0 \pmod{2}, \label{tmx3}\\
   & b_{3,4}^2(12n) \equiv p(n) \pmod{2}. \label{tmx4}
     \end{align}
 \end{thm}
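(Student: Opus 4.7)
The plan is to start from the mod-$4$ relation \eqref{tm2_3} and 3-dissect the right-hand side modulo~$2$. Equation \eqref{tm2_3} gives
\[
\sum_{n \geq 0} b^{2}_{3,4}(2n)\, q^n \equiv \frac{f_3 f_4}{f_1 f_{12}} \pmod{4},
\]
so the same congruence holds modulo~$2$. The four target congruences all split even indices by residue modulo $6$ or $12$, so a 3-dissection of this series, followed (for two of them) by a 2-dissection of the residue-$0$ piece, is exactly what is needed.

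Applying $f_j^2 \equiv f_{2j} \pmod{2}$ with $j=2$ gives $f_4 \equiv f_2^2$, so
\[
\frac{f_3 f_4}{f_1 f_{12}} \equiv \frac{f_2^2}{f_1}\cdot \frac{f_3}{f_{12}} \pmod{2}.
\]
Now $\frac{f_3}{f_{12}}$ is already a function of $q^3$, so plugging in the classical 3-dissection of $\psi(q)=f_2^2/f_1$ from \eqref{f2.2_f1} produces the full 3-dissection in a single step:
\[
\sum_{n \geq 0} b^{2}_{3,4}(2n)\, q^n \equiv \frac{f_6 f_9^2}{f_{12} f_{18}} + q\, \frac{f_3 f_{18}^2}{f_9 f_{12}} \pmod{2}.
\]
The $q^{3n+2}$-part vanishes, which immediately proves \eqref{tmx1}.

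Extracting the $q^{3n+1}$-part and replacing $q^3$ by $q$ yields $\sum_n b^{2}_{3,4}(6n+2)\, q^n \equiv \frac{f_1 f_6^2}{f_3 f_4} \pmod{2}$. To match this with $\sum_n b^{3}_{3}(n)\, q^n = f_3^3/f_1^3$, I cross-multiply and verify $f_1^4 f_6^2 \equiv f_3^4 f_4 \pmod{2}$; iterated use of $f_j^2 \equiv f_{2j}$ reduces both sides to $f_4 f_{12}$, which gives \eqref{tmx2}. Similarly, extracting the $q^{3n}$-part gives $\sum_n b^{2}_{3,4}(6n)\, q^n \equiv \frac{f_2 f_3^2}{f_4 f_6} \pmod{2}$; the congruences $f_3^2 \equiv f_6$ and $f_4 \equiv f_2^2$ collapse the right-hand side to $1/f_2 = \sum_n p(n)\, q^{2n}$. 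Reading off odd-power and even-power coefficients separately then yields \eqref{tmx3} and \eqref{tmx4}. I expect no serious obstacle: the only delicate step is noticing that after the initial mod-$2$ reduction the factor $\psi(q)$ peels off, and the classical 3-dissection \eqref{f2.2_f1} does the rest; everything else is routine subscript bookkeeping.
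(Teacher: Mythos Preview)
Your proof is correct and follows essentially the same route as the paper: reduce the generating function for $b^{2}_{3,4}(2n)$ modulo~$2$, peel off a factor of $\psi(q)=f_2^2/f_1$, and apply the classical 3-dissection \eqref{f2.2_f1} to read off the three residue classes, with a further trivial 2-dissection of $1/f_2$ for \eqref{tmx3} and \eqref{tmx4}. The only cosmetic difference is that the paper reaches the starting expression $\frac{f_3 f_2^2}{f_1 f_6^2}\pmod 2$ by specializing a general $b^{2}_{3,4k}(2n)$ formula, whereas you begin from the already-established \eqref{tm2_3} and use $f_4\equiv f_2^2$, $f_{12}\equiv f_6^2$ to land at the equivalent expression.
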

 \begin{proof} Substituting $t=2k$ in \eqref{tm1_1} and extracting only even power of $q$ on both sides of the resulting equation, we get
     \begin{align}
 \displaystyle \sum_{n=0}^\infty b^{2}_{3,4k}(2n) q^{2n} 
&= \frac{f_4^4 f_6 f_{12}^2 f_{4k}^2}{f_2^5 f_8 f_{24} f_{12k}^2}.
\end{align}
Thanks to \eqref{bt2}, we have
\begin{align}
 \displaystyle \sum_{n=0}^\infty b^{2}_{3,4k}(2n) q^{n} 
& \equiv \frac{f_3 f_{2k}^2}{f_1 f_{6k}^2}\pmod{2}. \label{tmx_1}
\end{align}
Now, Substituting $k=1$ in the above equation \eqref{tmx_1} and utilizing \eqref{f2.2_f1}, we have
\begin{align}
     \displaystyle \sum_{n=0}^\infty b^{2}_{3,4}(2n) q^{n} & \equiv \frac{f_9^2}{f_6f_{18}}+q\frac{f_3 f_{18}^2}{f_6^2 f_9}\pmod{2},\\
     & \equiv \frac{1}{f_6}+q\frac{f_{9}^3}{f_3^3}\pmod{2}\label{tmx_3},
\end{align}
which implies that
\begin{align}
     \displaystyle \sum_{n=0}^\infty b^{2}_{3,4}(6n) q^{n} 
     & \equiv \frac{1}{f_2}\pmod{2} \label{tmx_4}.
\end{align}
\noindent Extracting the terms containing $q^{3n+1}$ from \eqref{tmx_3}, we obtain     
     \begin{align}
      \displaystyle \sum_{n=0}^\infty b^{2}_{3,4}(6n+2) q^{n} 
     & \equiv \frac{f_3^3}{f_1^3}\pmod{2} \label{tmx_6}.
\end{align}
 Congruence \eqref{tmx1} follows from \eqref{tmx_3}. Using \eqref{bkln} and \eqref{tmx_6}, we arrive at \eqref{tmx2}. The equations \eqref{tmx3} and \eqref{tmx4} follows from \eqref{tmx_4} . This completes the proof.
 \end{proof}
  \begin{thm}For any positive integer $n$, we have
\begin{align}
 b^{2}_{3,8}(2n+1) \equiv 2b_{2,3}(n) \pmod{3}.\label{tm6}
\end{align}
\end{thm}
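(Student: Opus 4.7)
The plan is to begin with the generating function identity
$$\sum_{n=0}^{\infty} b^{2}_{3,8}(n)\,q^n = \frac{f_3^2 f_8^2}{f_1^2 f_{24}^2},$$
obtained from \eqref{main} by taking $\ell = 3$, $m = 8$, $k = 2$. I would then apply the 2-dissection \eqref{f3.2_f1.2} for $f_3^2/f_1^2$ and multiply through by $f_8^2/f_{24}^2$, which splits the sum into an even-indexed piece and an odd-indexed piece (the odd one carrying an explicit factor of $2q$).

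Next I would read off the odd part. Extracting the coefficients of $q^{2n+1}$ and replacing $q^2$ by $q$ produces something of the form
$$\sum_{n=0}^{\infty} b^{2}_{3,8}(2n+1)\,q^n \;=\; 2\,\frac{f_2 f_3^2 f_4^3}{f_1^4 f_6 f_{12}}.$$
At this point the modulo-3 work begins. I would use the standard congruences $f_j^3 \equiv f_{3j} \pmod{3}$ (a consequence of the binomial theorem, analogous to \eqref{bt2}) in two places: write $f_1^4 = f_1 \cdot f_1^3 \equiv f_1 f_3 \pmod{3}$, and replace $f_4^3$ by $f_{12} \pmod{3}$. Substituting these into the expression above yields cancellations that collapse it to
$$2\,\frac{f_2 f_3}{f_1 f_6} \pmod{3}.$$

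Finally, I would recognize the right-hand side as $2 \sum_{n \ge 0} b_{2,3}(n) q^n$, since by definition $\sum b_{2,3}(n) q^n = f_2 f_3 / (f_1 f_6)$. Comparing coefficients of $q^n$ on both sides establishes \eqref{tm6}. There is no real obstacle in this argument; the only step that requires care is making sure the two applications of $f_j^3 \equiv f_{3j} \pmod 3$ cancel the unwanted factors cleanly, which they do precisely because $f_1^4/f_4^3$ reduces modulo $3$ to $f_1 f_3/f_{12}$.
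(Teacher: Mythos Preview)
Your proposal is correct and follows essentially the same route as the paper: start from $\sum b^{2}_{3,8}(n)q^n = f_3^2 f_8^2/(f_1^2 f_{24}^2)$, insert the $2$-dissection \eqref{f3.2_f1.2}, pick off the odd part, and then use $f_j^3\equiv f_{3j}\pmod 3$ to collapse everything to $2\,f_2 f_3/(f_1 f_6)$. The only cosmetic difference is that you replace $q^2\mapsto q$ before doing the mod~$3$ simplifications, whereas the paper performs the binomial reductions first and only implicitly makes that substitution at the end.
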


\begin{proof}
Setting $t=4$ in \eqref{tm1_0}, we have
\begin{equation}
\displaystyle \sum_{n=0}^\infty  b^{2}_{3,8}(n)q^n=\frac{f_3^2 f_8^2}{f_1^2 f_{24}^2}.
\end{equation}
Substituting \eqref{f3.2_f1.2} in above equation, we have
\begin{align}
\displaystyle \sum_{n=0}^\infty  b^{2}_{3,8}(n)q^n 
&=\frac{f_4^4 f_6 f_8 f_{12}^2}{f_2^5 f_{24}^3}
+2q\frac{f_4 f_6^2 f_8^3 }{f_2^4 f_{12} f_{24}}.\label{tm6_1}
\end{align}
Extracting $q^{2n+1}$ terms from \eqref{tm6_1}, we get 
\begin{align}
\displaystyle \sum_{n=0}^\infty  b^{2}_{3,8}(2n+1)q^{2n+1} &=2q\frac{f_4 f_6^2 f_8^3 }{f_2^4 f_{12} f_{24}}. \label{tm6_2} 
\end{align}
By binomial theorem, we have
\begin{equation}\label{bt3}
f_t^3 \equiv f_{3t} \pmod{3}.  
\end{equation}
Invoking \eqref{bt3} in \eqref{tm6_2}, we have    
\begin{align}
\displaystyle \sum_{n=0}^\infty  b^{2}_{3,8}(2n+1)q^{2n+1} 
&\equiv 2q\frac{f_4 f_6^2 f_8^3 }{f_2^4 f_{12} f_{24}} \pmod 3, \nonumber\\
&\equiv 2q\frac{ f_4 f_6 f_8^3 }{f_2 f_{12} f_{24}} \pmod 3, \nonumber \\
&\equiv 2q\frac{ f_4 f_6  }{f_2 f_{12} } \pmod 3. \label{tm6_3}
\end{align}
Congruence \eqref{tm6} follows from \eqref{tm6_3}. This completes the proof\end{proof}

\begin{thm}
	For any positive integer $n$ and $3\nmid t$, we have
	\begin{align}
		&b^{4}_{3,4t}(16n+15) \equiv 0 \pmod{8}.\label{2ntm1}\\
		&b^{4}_{3,4t}(16n+11) \equiv 0 \pmod{8}.\label{2ntm2}\\
        &b^{4}_{3,4}(48n+8i+7) \equiv 0 \pmod{8},\,\,\text{for}\,\, i={2,3,4,5}.\label{2ntm6}\\
        &b^{4}_{3,4}(2n+1) \equiv \begin{cases} 
    4 \pmod{3}, & \text{if } n = 0, \\
    0 \pmod{3}, & \text{otherwise.}
    \end{cases}\label{2ntm4}	
    \end{align}
\end{thm}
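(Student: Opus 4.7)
The plan is to derive all four congruences from a single preparatory computation. I would start from \eqref{main} with $\ell=3$, $m=4t$, $k=4$, apply the 2-dissection \eqref{f3f4} for $f_3^4/f_1^4$, and observe that $f_{4t}^4/f_{12t}^4$ and all three bracketed pieces of \eqref{f3f4} are series in $q^2$, so the odd part of the generating function is carried entirely by the middle term $4q \cdot f_4^5 f_6^3 f_{12}/f_2^9$. Dividing by $q$ and replacing $q^2 \mapsto q$ produces
\[
\sum_{n \geq 0} b^4_{3,4t}(2n+1)\, q^n \;=\; 4 \cdot \frac{f_{2t}^4 f_2^5 f_3^3 f_6}{f_{6t}^4 f_1^9}.
\]
Iterating $f_1^2 \equiv f_2 \pmod{2}$ twice yields $f_1^8 \equiv f_2^4 \pmod{8}$, hence $1/f_1^9 \equiv 1/(f_1 f_2^4) \pmod{8}$. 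Substituting \eqref{f3.3_f1} for $f_3^3/f_1$ and extracting the odd powers of $q$ once more, I arrive at
\[
\sum_{k \geq 0} b^4_{3,4t}(4k+3)\, q^k \;\equiv\; 4 \cdot \frac{f_t^4}{f_{3t}^4} \cdot \frac{f_1 f_3 f_6^3}{f_2} \pmod{8}.
\]

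For \eqref{2ntm1} and \eqref{2ntm2}, the task reduces to pinning down the support of the right-hand side modulo $2$. Since $f_t^4/f_{3t}^4 \equiv f_{4t}/f_{12t} \pmod{2}$ is supported on $k \equiv 0 \pmod{4}$, only $\frac{f_1 f_3 f_6^3}{f_2}$ demands genuine work. I would obtain its 2-dissection by writing $f_1 f_3 = f_1^4 \cdot (f_3/f_1^3)$, using $f_1^4 \equiv f_2^2 \pmod{4}$, feeding in \eqref{f3_f1.3}, and then repeatedly invoking $f_j^2 \equiv f_{2j} \pmod{2}$ to compress the expression to
\[
\frac{f_1 f_3 f_6^3}{f_2} \;\equiv\; \frac{f_8^3 f_{12}}{f_4^4} + q \cdot \frac{f_{12}^4}{f_4} \pmod{2},
\]
whose right-hand side is manifestly supported only on $k \equiv 0, 1 \pmod{4}$. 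Multiplying by $f_{4t}/f_{12t}$ preserves the support, so the coefficients of $q^k$ vanish mod $2$ whenever $k \equiv 2, 3 \pmod{4}$; translating via $4k+3 = 16n+11$ and $4k+3 = 16n+15$ produces \eqref{2ntm1} and \eqref{2ntm2}.

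For \eqref{2ntm6} I would specialize to $t=1$, where the explicit cancellation $\frac{f_4}{f_{12}} \cdot \frac{f_8^3 f_{12}}{f_4^4} = \frac{f_8^3}{f_4^3}$ and $\frac{f_4}{f_{12}} \cdot q \cdot \frac{f_{12}^4}{f_4} = q f_{12}^3$ gives
\[
\frac{f_4}{f_{12}} \cdot \frac{f_1 f_3 f_6^3}{f_2} \;\equiv\; \frac{f_8^3}{f_4^3} + q \cdot f_{12}^3 \pmod{2}.
\]
Jacobi's identity $f_1^3 = \sum_{n \ge 0} (-1)^n (2n+1) q^{n(n+1)/2}$ applied with $q \mapsto q^{12}$ shows that $f_{12}^3 \pmod{2}$ is supported on the exponents $6n(n+1)$, all of which are multiples of $12$. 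Hence $q f_{12}^3$ lives only on $k \equiv 1 \pmod{12}$, so the coefficient of $q^k$ in $\frac{f_4}{f_{12}} \cdot \frac{f_1 f_3 f_6^3}{f_2}$ vanishes mod $2$ for $k \equiv 5, 9 \pmod{12}$. These yield the cases $i = 2$ (via $4k+3 = 48n+23$) and $i = 4$ (via $4k+3 = 48n+39$), while $i = 3, 5$ are already absorbed into \eqref{2ntm1}.

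Finally, for \eqref{2ntm4} I would bypass the mod-$8$ reduction and instead specialize $t = 1$ directly in the first identity, obtaining
\[
\sum_{n \geq 0} b^4_{3,4}(2n+1)\, q^n \;=\; 4 \cdot \frac{f_2^9 f_3^3}{f_1^9 f_6^3}.
\]
Cubing $f_j^3 \equiv f_{3j} \pmod{3}$ gives $f_j^9 \equiv f_{3j}^3 \pmod{3}$, so both $f_1^9/f_3^3$ and $f_2^9/f_6^3$ collapse to $1$ mod $3$, and the right-hand side reduces to the constant $4$ modulo $3$; the case distinction at $n = 0$ versus $n \geq 1$ is then immediate. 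The principal obstacle in the entire argument is the 2-dissection of $\frac{f_1 f_3 f_6^3}{f_2}$ modulo $2$: no identity in Section~2 dissects $f_1 f_3$ directly, so the bootstrap through $f_1^4 \cdot (f_3/f_1^3)$, followed by systematic use of $f_j^2 \equiv f_{2j} \pmod{2}$, must be executed carefully to collapse the result into a form whose support is visibly confined to two residue classes mod $4$.
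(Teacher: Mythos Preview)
Your argument is correct and mirrors the paper's strategy---apply \eqref{f3f4}, extract the odd part, apply \eqref{f3.3_f1}, extract again---up through the expression for $\sum_n b^4_{3,4t}(4n+3)\,q^n$; and your treatment of \eqref{2ntm4} is identical to the paper's. The divergence is at the third dissection. The paper rewrites the key factor as $\dfrac{f_3}{f_1}$ (via $f_1^2 \equiv f_2$) and applies \eqref{f3f1} directly; that identity already splits into a series in $q^2$ plus $q$ times a series in $q^2$, so after one more simplification both the $8n+3$ and $8n+7$ pieces are visibly series in $q^2$, giving \eqref{2ntm1}--\eqref{2ntm2}; for $t=1$ the $8n+7$ piece collapses to $4f_6f_{12}$, a series in $q^6$, which handles all of \eqref{2ntm6} in one stroke. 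Your route through $f_1 f_3 = f_1^4 \cdot (f_3/f_1^3)$ and \eqref{f3_f1.3} is equally valid but costs more mod-$2$ bookkeeping and, for \eqref{2ntm6}, an additional appeal to Jacobi's identity on $f_{12}^3$; it also treats $i=2,4$ and $i=3,5$ by separate mechanisms, whereas the paper's $f_6f_{12}$ step is uniform. The paper's choice of \eqref{f3f1} buys a cleaner endgame, while your argument shows that this particular lemma is not essential.
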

\begin{proof}
	Suppose $3 \nmid t$, substituting $\ell=3$, $m=4t$ and $k=4$ in \eqref{main}, we have
	\begin{equation} \label{2tm1_0}
		\displaystyle \sum_{n=0}^\infty b^{4}_{3,4t}(n)q^n = \frac{f_3^4 f_{4t}^4}{f_1^4 f_{12t}^4}.
	\end{equation}
	Substituting \eqref{f3f4} into \eqref{2tm1_0}, we get
	\begin{align}
		\displaystyle \sum_{n=0}^\infty b^{4}_{3,4t}(n) q^n 
		&= 		
		\frac{f_4^8 f_6^2 f_{12}^{4}f_{4t}^4}{f_2^{10} f_8^2 f_{24}^{2}f_{12t}^4}
		+ 4q \frac{f_4^5 f_6^3 f_{12}f_{4t}^4}{f_2^9f_{12t}^4}
		+ 4q^2 \frac{f_4^2 f_6^4 f_8^2 f_{24}^{2}f_{4t}^4}{f_2^8 f_{12}^{2}f_{12t}^4}. \label{2tm1_1}
	\end{align}
	Extracting the terms containing $q^{2n+1}$ from  \eqref{2tm1_1}, we get
	\begin{align}
	\displaystyle \sum_{n=0}^\infty b^{4}_{3,4t}(2n+1) q^{n} & \equiv  4\frac{f_2f_{6}f_{2t}^4}{f_{6t}^4}\frac{f_3^3}{f_1} \pmod{8}.\label{2tm1_2}
	\end{align}
	Invoking \eqref{f3.3_f1} in \eqref{2tm1_2} and then extracting odd powers of $q$ in the resulting equation, we get
	\begin{equation} \label{2tm1_4} 
		\displaystyle \sum_{n=0}^\infty b^{4}_{3,4t}(4n+3) q^n \equiv  4 \frac{f_{2t}^2f_6^3}{f^2_{6t}}\frac{f_3}{f_1} \pmod{8}.
	\end{equation}
	Substituting \eqref{f3f1} into \eqref{2tm1_4}, we have 
	\begin{align}\label{2l1}
		\displaystyle \sum_{n=0}^\infty b^{4}_{3,4t}(4n+3) q^n \equiv 4\frac{f_{2t}^2f_6^3}{f^2_{6t}}\left[
		\frac{f_4 f_6 f_{16}f_{24}^{2}}{f_2^{2} f_8 f_{12}f_{48}}
		+ q \frac{f_6 f_8^2 f_{48}}{f_2^2f_{16}f_{24}}\right]\pmod{8}.
	\end{align}
    Extracting the terms containing odd powers and even powers of $q$ respectively from the above equation, we get  
    \begin{align}\label{2l12}
		\displaystyle \sum_{n=0}^\infty b^{4}_{3,4t}(8n+3) q^n \equiv 4\frac{f_{t}^2f_2f_3^4f_8f_{12}^2}{f_{3t}^2f_1^2f_4f_{6}f_{24}} \equiv 4\frac{f_{2t}f_4^2f_6^2f_{12}^2}{f_{6t}f_4f_6f_{24}}\pmod{8}
	\end{align}
     and 
	\begin{align}\label{2l11}
		\displaystyle \sum_{n=0}^\infty b^{4}_{3,4t}(8n+7) q^n \equiv 4\frac{f_{t}^2f_4^2f_3^4f_{24}}{f_{3t}^2f_1^2f_8f_{12}} \equiv 4\frac{f_{2t}f_4^2f_6^2f_{24}}{f_{6t}f_2f_8f_{12}}\pmod{8}.
	\end{align}
	Congruence \eqref{2ntm1} and \eqref{2ntm2} respectively follows from the equations \eqref{2l11} and \eqref{2l12} respectively.
    \noindent From \eqref{2l11}, we have 
	\begin{align}\label{2l13}
		\displaystyle \sum_{n=0}^\infty b^{4}_{3,4}(8n+7) q^n \equiv 4f_6f_{12}\pmod{8}.
	\end{align}
Congruence \eqref{2ntm6} is true from the above equation.

\noindent Substituting $t=1$ in \eqref{2tm1_1}, we get
\begin{align}
		\displaystyle \sum_{n=0}^\infty b^{4}_{3,4}(2n+1) q^n \equiv
		4 \frac{f_2^9 f_3^3}{f_1^9f_{6}^3} \pmod 3\label{2tm1_11}
	\end{align}
Thanks to the binomial theorem, we arrive at \eqref{2ntm4}.
\end{proof}
\begin{thm}
	If $n$ cannot be represented as the sum of a pentagonal number and two times a pentagonal number, then 
	\begin{align}
    	&b^{4}_{3,4}(16n+3) \equiv 0 \pmod{8}.\label{2ntm51}\\
		&b^{4}_{3,4}(48n+7) \equiv 0 \pmod{8}.\label{2ntm71}
	\end{align}
\end{thm}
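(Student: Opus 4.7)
The plan is to specialize equations \eqref{2l12} and \eqref{2l13} from the previous theorem to $t=1$ and, in each case, reduce the right-hand side modulo $8$ to the common expression $4 f_1 f_2$ after an appropriate dissection. Euler's pentagonal number theorem then interprets the coefficient of $q^n$ in $f_1 f_2 \pmod 2$ as counting the representations $n = P + 2Q$ with $P, Q$ generalized pentagonal numbers, and the two statements drop out immediately.

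For \eqref{2ntm51}, setting $t=1$ in \eqref{2l12} gives
$$\sum_{n\ge 0} b^{4}_{3,4}(8n+3)\, q^n \equiv 4\,\frac{f_2 f_4 f_{12}^2}{f_{24}} \pmod{8}.$$
By \eqref{bt2}, $f_{12}^2 \equiv f_{24} \pmod 2$, so the ratio $f_{12}^2/f_{24}$ is a genuine element of $\mathbb Z[[q]]$ of the form $1 + 2g(q)$, and hence $4\,f_{12}^2/f_{24} \equiv 4 \pmod 8$. Thus the right-hand side collapses to $4 f_2 f_4 \pmod 8$. Since $f_2 f_4$ supports only even powers of $q$, extracting those coefficients (which are precisely $b^{4}_{3,4}(16n+3)$) and substituting $q^2 \mapsto q$ yields
$$\sum_{n\ge 0} b^{4}_{3,4}(16n+3)\, q^n \equiv 4 f_1 f_2 \pmod{8}.$$

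For \eqref{2ntm71}, I start from \eqref{2l13}, which reads $\sum_n b^{4}_{3,4}(8n+7)\, q^n \equiv 4 f_6 f_{12} \pmod 8$. The product $f_6 f_{12}$ is supported only on exponents divisible by $6$, so the only nonzero contributions modulo $8$ on the left occur at $n = 6m$, i.e., at indices $48m+7$. Substituting $q^6 \mapsto q$ converts this into
$$\sum_{m\ge 0} b^{4}_{3,4}(48m+7)\, q^m \equiv 4 f_1 f_2 \pmod{8}.$$

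To conclude, by Euler's pentagonal number theorem we have $f_1 = \sum_{k\in\mathbb Z} (-1)^k q^{k(3k-1)/2}$ and hence $f_2 = \sum_{k\in\mathbb Z} (-1)^k q^{k(3k-1)}$. Modulo $2$ the signs vanish, so the coefficient of $q^n$ in $f_1 f_2$ is the parity of the number of representations $n = P + 2Q$ with $P, Q$ generalized pentagonal numbers. Whenever $n$ admits no such representation the coefficient is $0 \pmod 2$, and multiplying by $4$ yields the desired vanishing modulo $8$ for both $b^{4}_{3,4}(16n+3)$ and $b^{4}_{3,4}(48n+7)$. The only step that requires a little care is the simplification $4\,f_{12}^2/f_{24} \equiv 4 \pmod 8$, which is valid precisely because $f_{12}^2/f_{24} \in \mathbb Z[[q]]$ has constant term $1$, so its residue modulo $2$ is unambiguous; once this is in hand the remainder of the argument is direct dissection plus the pentagonal number theorem.
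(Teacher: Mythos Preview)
Your proof is correct and follows essentially the same route as the paper: you specialize \eqref{2l12} and \eqref{2l13} to $t=1$, reduce modulo $8$ to obtain $4f_1f_2$ after the relevant dissection, and then invoke Euler's pentagonal number theorem. Your justification of the simplification $4\,f_{12}^2/f_{24}\equiv 4\pmod 8$ is in fact more carefully spelled out than in the paper, which simply asserts the resulting congruence.
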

\begin{proof}
	Substituting $t=1$ into \eqref{2l12} and equating the even powers of $q$ on both sides in the resulting equation, we get 
    \begin{align}
		b^{4}_{3,4}(16n+3) &\equiv 4f_1f_2 \pmod{8},\label{2ntm5}\\
        &\equiv \sum_{k,l=-\infty}^{\infty} (-1)^{k+l}q^{\frac{k(3k-1)}{2}+{l(3l-1)}} \pmod 8\label{2ntm512}
        \end{align}
    which yields \eqref{2ntm51}. Now using \eqref{2l13}, we get
    \begin{align}
		\displaystyle \sum_{n=0}^\infty b^{4}_{3,4}(48n+7) q^n &\equiv 4f_1f_{2}\pmod{8}.\label{2ntm7}\\
        &\equiv \sum_{k,l=-\infty}^{\infty} (-1)^{k+l}q^{\frac{k(3k-1)}{2}+{l(3l-1)}}. \pmod 8\label{2ntm712}
	\end{align}
    This completes the proof of \eqref{2ntm71}.
\end{proof}
\begin{cor}
Let $p \geq 5$ be a prime and $\left( \frac{-2}{p} \right) = -1$. Then for all $k,m \geq 0$ with $p \nmid m$,
\begin{align}
    b_{3,4}^{4} \left(16\times p^{2k+1}m+ 2 \times p^{2k+2} + 1 \right) &\equiv 0 \pmod{8},\label{cks1}\\
    b_{3,4}^{4} \left(48\times p^{2k+1}m+ 6\times p^{2k+2} + 1 \right) &\equiv 0 \pmod{8}.\label{cks3}
\end{align}
\end{cor}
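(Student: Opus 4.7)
The plan is to reduce both parts of the corollary to the preceding theorem. The key observation is that both indices $16 p^{2k+1} m + 2 p^{2k+2} + 1$ and $48 p^{2k+1} m + 6 p^{2k+2} + 1$, rewritten as $16n+3$ and $48n+7$ respectively, collapse to the same inner index
\[
n = p^{2k+1} m + \frac{p^{2k+2} - 1}{8},
\]
which is a nonnegative integer because $p$ is odd and hence $p^{2k+2} \equiv 1 \pmod{8}$. By \eqref{2ntm51} and \eqref{2ntm71}, it therefore suffices to show that this single $n$ admits no representation as a sum of a generalized pentagonal number and twice a generalized pentagonal number.

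The next step is to translate this into a question about $u^{2} + 2v^{2}$. Using the identity $24 \cdot j(3j-1)/2 + 1 = (6j-1)^{2}$, any putative representation $n = j(3j-1)/2 + l(3l-1)$ would force
\[
24n + 3 = (6j-1)^{2} + 2(6l-1)^{2},
\]
so in particular $24n+3$ must be representable as $u^{2} + 2v^{2}$ with integers $u, v$. A direct computation gives $24n + 3 = 3\, p^{2k+1}(8m + p)$. Since $p \geq 5$ is prime, $p \nmid 2, 3$, and combined with $p \nmid m$ this yields $v_{p}(8m + p) = 0$; hence $v_{p}(24n + 3) = 2k + 1$, an odd exponent.

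The central step will be the classical descent argument under the hypothesis $\left(\frac{-2}{p}\right) = -1$: if $p \mid u^{2} + 2v^{2}$ then $p \nmid v$ is impossible, since it would give $(uv^{-1})^{2} \equiv -2 \pmod{p}$; hence $p \mid v$, which forces $p \mid u$, so $p^{2} \mid u^{2} + 2v^{2}$. Dividing out $p^{2}$ and iterating shows $v_{p}(u^{2} + 2v^{2})$ is always even, contradicting the odd value $2k+1$ established above. Therefore $24n+3$ has no such representation, the coefficient of $q^{n}$ in $f_{1} f_{2}$ vanishes, and $4 \cdot 0 \equiv 0 \pmod{8}$ yields both \eqref{cks1} and \eqref{cks3}. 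I expect the main obstacle to be only the preliminary bookkeeping confirming that the two families indeed collapse to the same inner $n$; the descent itself is a short, textbook computation.
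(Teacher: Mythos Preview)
Your proof is correct and follows essentially the same route as the paper: reduce to the generating-function identity $b_{3,4}^{4}(16n+3)\equiv 4f_1f_2\pmod{8}$ (equivalently, to the pentagonal-sum criterion in \eqref{2ntm51}--\eqref{2ntm71}), compute $24n+3=3p^{2k+1}(8m+p)$ so that $v_p(24n+3)=2k+1$ is odd, and invoke the classical fact that under $\bigl(\tfrac{-2}{p}\bigr)=-1$ every integer of the form $u^2+2v^2$ has even $p$-adic valuation. Your write-up is slightly tidier than the paper's in two respects: you observe that the two families in \eqref{cks1} and \eqref{cks3} share the \emph{same} inner index $n=p^{2k+1}m+\frac{p^{2k+2}-1}{8}$, so one argument covers both, and you spell out the short descent that the paper simply cites.
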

\begin{proof}
    From \eqref{2ntm512}, we have
\begin{equation}\label{cks2}
    \sum_{n=0}^{\infty} b_{3,4}^{4}(16n+3)q^{24n+3} \equiv \sum_{k,l=-\infty}^{\infty} (-1)^{k+l} q^{(6k-1)^2+2(6l-1)^2} \pmod{8}.
\end{equation}
Thus, if $24n + 3$ is not of the form $(6k - 1)^2 + 2(6l - 1)^2$, then $b_{3,4}^{4}(16n+3) \equiv 0 \pmod{8}$. If $N$ is of the form $x^2 + 2y^2$, then $\nu_p(N)$ is even since $   \left( \frac{-2}{p} \right) = -1.$ However, here $n = p^{2k+1}m+\dfrac{p^{2k+2}-1}{8}$ and $\nu_p(24n+3)$ is odd. So $24n+3$ is not of the form $x^2 + 2y^2$ and $b_{3,4}^{4}(16n+3) \equiv 0 \pmod{8}$. This completes the proof of \eqref{cks1}. Since the proof of equation \eqref{cks3} follows a similar argument as that of \eqref{cks1}, we omit the details.
\end{proof} 

\begin{thm}We have for all $n\geq 0,$
\begin{align}
    &\sum_{n=0}^\infty b^{4}_{3,2}(2n) q^{n}= \dfrac{f_2^9f_3}{f_1^{7}f_{6}^3}+3q\frac{f_2f_{6}^5}{f_1^3f_3^3}\label{b241},\\
 &\sum_{n=0}^\infty b^{4}_{3,2}(2n+1) q^n= 4\dfrac{f_2^5f_{6}}{f_1^5f_3},\label{2ntm311}\\
&b^{4}_{3,2}(16n+9) \equiv 0\pmod 8,\label{cks17}\\
&b^{4}_{3,2}(16n+13) \equiv 0\pmod 8.\label{cks15}\\
&\sum_{n=0}^\infty b^{4}_{3,2}(8n+5) q^n\equiv 4{f_1f_{6}}\pmod 8,\label{cks221}
\end{align}
   \end{thm}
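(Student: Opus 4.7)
The plan is to first obtain a 2-dissection of the generating function $\sum_{n\geq 0} b^{4}_{3,2}(n) q^n = \dfrac{f_2^4 f_3^4}{f_1^4 f_6^4}$ in the precise shape required by \eqref{b241} and \eqref{2ntm311}. To do so, I would write $\dfrac{f_3^4}{f_1^4} = \dfrac{f_3}{f_1^3}\cdot\dfrac{f_3^3}{f_1}$ and substitute the two 2-dissections \eqref{f3_f1.3} and \eqref{f3.3_f1}. Expanding the product collapses the $q^1$ cross-terms into a single coefficient $4$, and yields a trinomial in $q$ of degrees $0,1,2$ with coefficients $1,4,3$. Multiplying by $\dfrac{f_2^4}{f_6^4}$ produces the dissection of the full series; separating even and odd powers of $q$ and replacing $q^2$ by $q$ in the even part immediately gives \eqref{b241}, while the odd part yields \eqref{2ntm311}.

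For the mod 8 statements, I would iterate the following cycle: reduce the current factor modulo 2 using $f_j^2\equiv f_{2j}\pmod 2$ from \eqref{bt2}, identify an emerging factor of $\dfrac{1}{f_1 f_3}$, and substitute its 2-dissection \eqref{1byf1f3}; then separate even and odd powers. Starting from \eqref{2ntm311}, the coefficient $4$ in front lets me work modulo $2$ on $\dfrac{f_2^5 f_6}{f_1^5 f_3}$, which reduces to $\dfrac{f_2^3 f_6}{f_1 f_3}\pmod 2$. Applying \eqref{1byf1f3}, multiplying by $f_2^3 f_6$, and extracting even/odd powers produces mod~$8$ generating-function expressions for $b^{4}_{3,2}(4n+1)$ and $b^{4}_{3,2}(4n+3)$.

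Repeating the same cycle once more on the expression for $\sum b^{4}_{3,2}(4n+1)q^n$, I obtain mod~$8$ formulas for $b^{4}_{3,2}(8n+1)$ and $b^{4}_{3,2}(8n+5)$. A final mod~$2$ cleanup via \eqref{bt2} is expected to collapse the $b^{4}_{3,2}(8n+5)$ expression to the compact form $4 f_1 f_6$, giving \eqref{cks221}. Finally, the two vanishing congruences \eqref{cks17} and \eqref{cks15} follow by performing one more 2-dissection of the mod~$8$ generating functions for $\sum b^{4}_{3,2}(8n+1)q^n$ and $\sum b^{4}_{3,2}(8n+5)q^n$ respectively; the residues at $16n+9$ and $16n+13$ are read off from the odd-power parts, which should vanish identically modulo $8$.

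The principal obstacle is the algebraic bookkeeping through three successive 2-dissections: at each stage the intermediate ratios carry high powers of $f_r$ for $r\in\{2,4,6,8,12,24\}$, and one must apply \eqref{bt2} selectively so that a clean factor of $\dfrac{1}{f_1 f_3}$ reappears and \eqref{1byf1f3} can be invoked at the next step without losing information needed afterward. Matching the final mod~$2$ form to the compact target $4 f_1 f_6$ in \eqref{cks221} is the most delicate identification, and will require unfolding all the accumulated numerator and denominator factors before the last cancellation becomes visible.
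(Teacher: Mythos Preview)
Your plan for \eqref{b241} and \eqref{2ntm311} is exactly the paper's: multiply the dissections \eqref{f3.3_f1} and \eqref{f3_f1.3} to get a three-term expansion of $f_3^4/f_1^4$ (the paper records this as \eqref{cks51}), multiply by $f_2^4/f_6^4$, and separate parities. For the passage to $4n+1$ and $4n+3$ the paper proceeds slightly differently from you: rather than reducing $f_2^5 f_6/(f_1^5 f_3)$ modulo~$2$ first, it builds an \emph{exact} 2-dissection of $1/(f_1^5 f_3)$ by multiplying \eqref{1_f1.4} and \eqref{1byf1f3} (this is \eqref{cks53}), obtains exact identities \eqref{cks6}--\eqref{cks7}, and only then reduces modulo~$8$. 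Your shortcut of reducing modulo~$2$ immediately (legitimate because of the global factor~$4$) lands on the same congruence $\sum b^{4}_{3,2}(4n+1)q^n\equiv 4 f_2^4/(f_1 f_3)\pmod 8$, so the two routes rejoin at \eqref{cks11}--\eqref{cks14}.

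At the last stage there is a genuine divergence. After reaching $4\,f_2^3 f_1^2/f_3^2$ and $4\,f_6^3 f_1^2/f_3^2$ for the $8n+1$ and $8n+5$ series, no factor of $1/(f_1 f_3)$ re-emerges; the paper instead invokes the dedicated dissection \eqref{f1.2_f3.2} of $f_1^2/f_3^2$. Your alternative---just applying \eqref{bt2}---does work for the two vanishing congruences, since $f_1^2/f_3^2\equiv f_2/f_6\pmod 2$ makes both expressions functions of $q^2$, whence \eqref{cks17} and \eqref{cks15} follow at once. However, your claim that the $8n+5$ series collapses to $4f_1 f_6$ is not correct: the reduction gives $4\,f_6^3 f_1^2/f_3^2\equiv 4 f_2 f_{12}\pmod 8$ (equivalently the paper's $4\,f_2 f_{12}^2/f_6^2$ in \eqref{cks16}), and $f_2 f_{12}\not\equiv f_1 f_6\pmod 2$ already at the $q^1$ coefficient. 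The compact form $4 f_1 f_6$ arises only \emph{after} one further extraction of even powers, i.e.\ for $\sum b^{4}_{3,2}(16n+5)q^n$, which is precisely how the paper uses it in the next theorem. In other words, \eqref{cks221} as printed (with $8n+5$) is a misprint for $16n+5$, and no ``mod~$2$ cleanup'' of the $8n+5$ series will produce it; you should plan for that extra dissection step.
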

\begin{proof}
    \noindent Multiplying \eqref{f3.3_f1} and \eqref{f3_f1.3}, we have
\begin{equation}\label{cks51}
    \frac{f_3^4}{f_1^4} = \frac{f_4^9 f_6^5}{f_2^{11} f_{12}^3} + 3q^2 \frac{f_4 f_6 f_{12}^5}{f_2^7} + 4q \frac{f_4^5 f_6^3 f_{12}}{f_2^9}.
\end{equation}
Applying the identity in \eqref{main} with $\ell=3$, $m=2$ and $k=4$, we have 
\begin{equation} \label{2tm1_10}
		\displaystyle \sum_{n=0}^\infty b^{4}_{3,2}(n)q^n = \frac{f_3^4 f_{2}^4}{f_1^4 f_{6}^4}.
	\end{equation}
Invoking \eqref{cks51} and extracting the terms containing $q^{2n}$ and $q^{2n+1}$ respectively from the resulting equation, we arrive at \eqref{b241} and \eqref{2ntm311} respectively. Multiplying \eqref{1_f1.4} and \eqref{1byf1f3}, we find that
\begin{equation}
    \dfrac{1}{f_1^5f_3} = \frac{f_4^{13} f_{12}^5}{f_2^{16} f_8^2 f_6^4 f_{24}^2}
    + q \frac{f_4^{19} f_{24}^2}{f_2^{18} f_8^6 f_6^2 f_{12}}
    + 4q \frac{f_4 f_8^6 f_{12}^5}{f_2^{12} f_6^4 f_{24}^2}
    + 4q^2 \frac{f_4^7 f_8^2 f_{24}^2}{f_2^{14} f_6^2 f_{12}}.\label{cks53}
\end{equation}
Substituting \eqref{cks53} in \eqref{2ntm311} and then extracting the terms containing $q^{2n}$ and $q^{2n+1}$ respectively from the resulting equation, we get
\begin{align}
            &\sum_{n=0}^\infty b^{4}_{3,2}(4n+1) q^{n}= 4\frac{f_2^{13} f_6^5}{f_1^{11} f_4^2 f_3^3 f_{12}^2} + 16q \frac{f_2^7 f_4^2 f_{12}^2}{f_1^9 f_3 f_{6}},\label{cks6}\\
         &\sum_{n=0}^\infty b^{4}_{3,2}(4n+3) q^{n} = 
    4\frac{f_2^{19} f_{12}^2}{f_1^{13} f_4^6 f_3 f_{6}}
    + 16 \frac{f_2f_4^6 f_{6}^5}{f_1^7 f_3^3 f_{12}^2}.\label{cks7}
\end{align}
 Using \eqref{bt2} in \eqref{cks6}, we have 
\begin{align}\label{cks11}
    \sum_{n=0}^\infty b^{4}_{3,2}(4n+1) q^n \equiv 4\frac{f_2^4}{f_1f_3}\pmod 8.
\end{align}
Invoking \eqref{1byf1f3} in \eqref{cks11}, we have
\begin{align}\label{cks12}
    \sum_{n=0}^\infty b^{4}_{3,2}(4n+1) q^n \equiv 4\left[\frac{f_2^2f_8^2 f_{12}^5}{f_4 f_6^4 f_{24}^2} + q \frac{f_4^5 f_{24}^2}{f_6^2 f_8^2 f_{12}}\right]\pmod 8.
\end{align}
Using \eqref{bt2} in \eqref{cks12}, we get
\begin{align}\label{cks13}
    \sum_{n=0}^\infty b^{4}_{3,2}(8n+1) q^n \equiv 4\frac{f_2^3f_1^2}{f_3^2}\pmod 8
\end{align}
and 
\begin{align}\label{cks14}
    \sum_{n=0}^\infty b^{4}_{3,2}(8n+5) q^n \equiv 4\frac{f_6^3f_1^2}{f_3^2}\pmod 8
\end{align}
Substituting \eqref{f1.2_f3.2} in \eqref{cks13}, and extracting odd powers of $q$ on both sides of the equation, we arrive at \eqref{cks17}.
Substituting \eqref{f1.2_f3.2} in \eqref{cks14}, we get 
\begin{align}\label{cks16}
    \sum_{n=0}^\infty b^{4}_{3,2}(8n+5) q^n \equiv 4\frac{f_2f_{12}^2}{f_6^2}\pmod 8.
\end{align}
Congruence \eqref{cks15} follows by equating odd powers of $q$ from above equation, \eqref{cks221} is obtained from \eqref{cks16}. This completes the proof.
\end{proof}
\begin{thm}
	If $n$ cannot be represented as the sum of a pentagonal number and six times a pentagonal number, then 
	\begin{align}\label{cks21}
    	&b^{4}_{3,2}(16n+5) \equiv 0 \pmod{8}.
	\end{align}
\end{thm}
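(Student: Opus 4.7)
The plan is to mimic the pentagonal-type argument used to derive \eqref{2ntm51} and \eqref{2ntm71}, starting from the $8n+5$ dissection \eqref{cks16} already at hand:
\[
\sum_{n=0}^{\infty} b^{4}_{3,2}(8n+5)\, q^n \equiv 4\,\frac{f_2 f_{12}^2}{f_6^2} \pmod{8}.
\]
Every subscript on the right-hand side is even, so the series involves only even powers of $q$. The first step is therefore to extract even powers and perform the substitution $q^2 \mapsto q$, under which $f_{2j} \mapsto f_j$ throughout. This immediately gives
\[
\sum_{n=0}^{\infty} b^{4}_{3,2}(16n+5)\, q^n \equiv 4\,\frac{f_1 f_6^2}{f_3^2} \pmod{8}.
\]

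The next step is to reduce the theta-quotient $f_6^2/f_3^2$ modulo $2$. By \eqref{bt2} we have $f_3^2 \equiv f_6 \pmod{2}$, and since $f_6$ has constant term $1$ it is invertible as a formal power series in $\mathbb{Z}[[q]]$. Dividing the congruence $f_3^2 \equiv f_6 \pmod 2$ by $f_6$ therefore upgrades to $f_6^2/f_3^2 \equiv f_6^2/f_6 = f_6 \pmod{2}$. Substituting this back yields the clean shape
\[
\sum_{n=0}^{\infty} b^{4}_{3,2}(16n+5)\, q^n \equiv 4\, f_1 f_6 \pmod{8}.
\]

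Finally, I would invoke Euler's pentagonal number theorem (the $f(-q)$ identity in the preliminaries) to expand each factor: $f_1 = \sum_{k \in \mathbb{Z}} (-1)^k q^{k(3k-1)/2}$ and, after replacing $q$ by $q^6$, $f_6 = \sum_{l \in \mathbb{Z}} (-1)^l q^{3l(3l-1)}$. Multiplying these two sums gives
\[
4 f_1 f_6 \equiv 4 \sum_{k,l \in \mathbb{Z}} (-1)^{k+l}\, q^{k(3k-1)/2 + 3l(3l-1)} \pmod{8}.
\]
The exponent $k(3k-1)/2 + 3l(3l-1)$ is exactly a (generalized) pentagonal number plus six times a (generalized) pentagonal number. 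Hence if $n$ admits no such representation, the coefficient of $q^n$ on the right-hand side vanishes, forcing $b^{4}_{3,2}(16n+5) \equiv 0 \pmod{8}$.

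I do not anticipate a real obstacle: the only step that is not completely mechanical is the mod-$2$ simplification of $f_6^2/f_3^2$, and this is immediate from \eqref{bt2} together with invertibility of $f_6$. The overall structure is strictly parallel to the passage from \eqref{2ntm5} to \eqref{2ntm51}, the only difference being that here we end at $4 f_1 f_6$ (pentagonal plus six-times pentagonal) instead of $4 f_1 f_2$ (pentagonal plus twice pentagonal).
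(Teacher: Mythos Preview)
Your proposal is correct and follows essentially the same route as the paper: starting from \eqref{cks16}, extracting even powers to reach $16n+5$, reducing $4f_1f_6^2/f_3^2$ to $4f_1f_6\pmod 8$ via \eqref{bt2}, and then expanding with Euler's pentagonal number theorem. The paper's own proof jumps directly from \eqref{cks16} to $4f_1f_6$ without writing out the intermediate extraction and simplification, so your version is in fact more explicit.
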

\begin{proof}
Observe \eqref{cks16}, we have  
    \begin{align}
		b^{4}_{3,4}(16n+5)q^n &\equiv 4f_1f_6 \pmod{8},\label{2ntm5}\\
        &\equiv \sum_{k,l=-\infty}^{\infty} (-1)^{k+l}q^{\frac{k(3k-1)}{2}+{3l(3l-1)}} \pmod 8.\label{cks22}
        \end{align}
    This completes the proof of \eqref{cks21}.    
\end{proof}
\begin{cor}
Let $p \geq 5$ be a prime and $\left( \frac{-6}{p} \right) = -1$. Then for all $k,m \geq 0$ with $p \nmid m$,
\begin{align}
    b_{3,2}^{4} \left(16\times p^{2k+1}m+ \frac{2\times p^{2k+2}+1}{3} \right) &\equiv 0 \pmod{8}.\label{cks22}
\end{align}
\end{cor}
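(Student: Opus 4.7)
My approach is a straightforward analogue of the proof of the preceding corollary, with the quadratic form $x^2+2y^2$ replaced throughout by $x^2+6y^2$. The starting point is the base congruence
\[
\sum_{n\ge 0} b^{4}_{3,2}(16n+5)\,q^{n}\equiv 4\,f_{1}f_{6}\pmod{8},
\]
which I would obtain by extracting the even powers of $q$ from \eqref{cks16} and substituting $q^{2}\mapsto q$: this gives $\sum b^{4}_{3,2}(16n+5)\,q^{n}\equiv 4\,\tfrac{f_{1}f_{6}^{2}}{f_{3}^{2}}\pmod 8$, and the factor $f_{6}^{2}/f_{3}^{2}$ is congruent to $f_{6}\pmod{2}$ (hence, after multiplication by $4$, absorbs into $4f_{6}\pmod{8}$) by the binomial identity $f_{j}^{2}\equiv f_{2j}\pmod{2}$.

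Next I would apply Euler's pentagonal number theorem to obtain
\[
f_{1}f_{6}=\sum_{k,\ell\in\mathbb{Z}}(-1)^{k+\ell}\,q^{k(3k-1)/2+3\ell(3\ell-1)},
\]
and invoke the elementary identity
\[
24\!\left(\tfrac{k(3k-1)}{2}+3\ell(3\ell-1)\right)+7=(6k-1)^{2}+6(6\ell-1)^{2}.
\]
Replacing $q$ with $q^{24}$ and multiplying by $q^{7}$ therefore gives
\[
\sum_{n\ge 0} b^{4}_{3,2}(16n+5)\,q^{24n+7}\equiv 4\sum_{k,\ell\in\mathbb{Z}}(-1)^{k+\ell}\,q^{(6k-1)^{2}+6(6\ell-1)^{2}}\pmod{8},
\]
from which $b^{4}_{3,2}(16n+5)\equiv 0\pmod{8}$ whenever $24n+7$ cannot be represented in the form $x^{2}+6y^{2}$.

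The arithmetic step uses the hypothesis $\left(\tfrac{-6}{p}\right)=-1$ in the standard way: if $p\ge 5$ is a prime dividing $x^{2}+6y^{2}$ with $p\nmid y$, then $(x/y)^{2}\equiv -6\pmod{p}$, contradicting the Legendre-symbol assumption; hence $p\mid y$, and then $p\mid x$, so $p^{2}\mid x^{2}+6y^{2}$. An induction on the exponent then shows that $\nu_{p}(N)$ must be even for every $N$ expressible as $x^{2}+6y^{2}$. It therefore suffices to prove that, for the corollary's explicit argument, the associated quantity $24n+7$ has odd $p$-adic valuation. Writing the argument as $16n+5$ with $n$ an integer (using $p^{2k+2}\equiv 1\pmod{24}$ for every prime $p\ge 5$, which guarantees the relevant divisibilities), a direct calculation should yield $24n+7=p^{2k+1}\cdot T$ with $p\nmid T$ under the assumption $p\nmid m$, so that $\nu_{p}(24n+7)=2k+1$ is odd.

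The only real obstacle is the algebraic bookkeeping needed to reconcile the corollary's stated form $16p^{2k+1}m+(2p^{2k+2}+1)/3$ with the canonical $16n+5$ layout required by the base congruence; this amounts to solving for $n$ and verifying integrality via $p^{2k+2}\equiv 1\pmod{24}$, after which the factorisation $24n+7=p^{2k+1}\cdot T$ is immediate. The conceptual content—pentagonal-number expansion, completing the square to $(6k-1)^{2}+6(6\ell-1)^{2}$, and the Legendre-symbol valuation argument—is entirely parallel to the preceding corollary's proof and presents no additional difficulty.
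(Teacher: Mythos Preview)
Your approach is essentially identical to the paper's: both start from the congruence $\sum_{n\ge 0} b^{4}_{3,2}(16n+5)\,q^{24n+7}\equiv 4\sum_{k,\ell}(-1)^{k+\ell}q^{(6k-1)^2+6(6\ell-1)^2}\pmod 8$, observe that $\nu_p(x^2+6y^2)$ must be even whenever $\bigl(\tfrac{-6}{p}\bigr)=-1$, and then check that the corollary's argument corresponds to a value of $24n+7$ with odd $p$-adic valuation. The paper's write-up is terser---it simply records the explicit value $n=p^{2k+1}m+\tfrac{7p^{2k+2}-7}{24}$ (so that $24n+7=p^{2k+1}(24m+7p)$ has $\nu_p=2k+1$)---in place of your more detailed outline of the Legendre-symbol and bookkeeping steps.
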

\begin{proof}
    From \eqref{cks16}, we have
\begin{equation}\label{cks23}
    \sum_{n=0}^{\infty} b_{3,2}^{4}(16n+5)q^{24n+7} \equiv \sum_{k,l=-\infty}^{\infty} (-1)^{k+l} q^{(6k-1)^2+6(6l-1)^2} \pmod{8}.
\end{equation}
Thus, if $24n + 7$ is not of the form $(6k - 1)^2 + 6(6l - 1)^2$, then $b_{3,2}^{4}(16n+5) \equiv 0 \pmod{8}$. If $N$ is of the form $x^2 + 6y^2$, then $\nu_p(N)$ is even since $   \left( \frac{-6}{p} \right) = -1.$ However, here $n = p^{2k+1}m+\dfrac{7p^{2k+2}-7}{24}$ and $\nu_p(24n+7)$ is odd. So $24n+7$ is not of the form $x^2 + 6y^2$ and $b_{3,2}^{4}(16n+5) \equiv 0 \pmod{8}$. This completes the proof of \eqref{cks22}. 
\end{proof} 
\begin{thm}
		For positive integers $j,n \geq 0$, we have
        \begin{equation}\label{thyn0}
            b_{2,5}^4(2n+1)\equiv 0\pmod{4},
        \end{equation}
		\begin{equation} \label{thyn1}
			\displaystyle \sum_{n=0}^\infty b_{2,5}^4\left( 4\times 5^{2j} n + \frac{13\times 5^{2j} -1}{6}\right) q^n \equiv 2 f_1 f_2 f_{10} \pmod{4},
		\end{equation}
		and for $y \in \{41,89\}$, we have 
		\begin{equation} \label{thyn2}
			 b_{2,5}^4\left( 4\times 5^{2j+2} n + \frac{y\times 5^{2j+1} -1}{6}\right) \equiv 0 \pmod{4}.
		\end{equation}
\end{thm}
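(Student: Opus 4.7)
The plan is to pass to congruences modulo $4$ by a standard binomial reduction, nail down the base case $j=0$ of \eqref{thyn1} via a single $2$-dissection, and then propagate upward by a two-step $5$-dissection whose structure simultaneously drives the induction for \eqref{thyn1} and yields the vanishing assertions \eqref{thyn2}.

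To start, I would use $(1-q^n)^4\equiv(1-q^{2n})^2\pmod 4$, hence $f_j^4\equiv f_{2j}^2\pmod 4$, to reduce
\[
\sum_{n\geq 0} b_{2,5}^4(n)\,q^n \;=\;\frac{f_2^4 f_5^4}{f_1^4 f_{10}^4}\;\equiv\;\frac{f_2^2}{f_{10}^2}\pmod 4.
\]
The right-hand side is a series in $q^2$, so all odd coefficients vanish mod $4$, proving \eqref{thyn0}; extracting the even part yields $\sum_n b_{2,5}^4(2n)q^n\equiv f_1^2/f_5^2\pmod 4$. Squaring \eqref{f1_f5} gives a clean $2$-dissection of $f_1^2/f_5^2$ whose unique odd part is the cross term $-2q\,f_2f_4f_{20}^3/f_{10}^4$; extracting odd powers, relabeling $q^2\to q$, and using $f_5^4\equiv f_{10}^2\pmod 4$ produces
\[
\sum_{n\geq 0} b_{2,5}^4(4n+2)\,q^n\;\equiv\;2f_1 f_2 f_{10}\pmod 4,
\]
which is \eqref{thyn1} at $j=0$.

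The heart of the argument is an endomorphism-type claim: extracting $n\equiv 13\pmod{25}$ from $f_1f_2f_{10}$ and rescaling $q^{25}\to q$ returns $-f_1f_2f_{10}$. To prove this I would apply \eqref{f_1_dissection} to $f_1$ and, via $q\to q^2$, to $f_2=f_{50}\bigl(R(q^{10})^{-1}-q^2-q^4R(q^{10})\bigr)$. The product of these two three-term expansions, together with $f_{10}$ (already a series in $q^5$), generates nine terms of the form $q^a\cdot(\text{series in }q^5)$. Sorting by $a\bmod 5$, the crucial observation is that residue $3$ receives only the single pure term $q^3 f_{10}f_{25}f_{50}$, while every other residue class receives two contributions. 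Extracting residue $3$ and setting $q^5\to q$ therefore gives exactly $f_2 f_5 f_{10}$; since $f_5,f_{10}$ are already functions of $q^5$, a second dissection of the remaining $f_2$ produces the three-term identity $f_2f_5f_{10}=f_5f_{10}f_{50}\bigl(R(q^{10})^{-1}-q^2-q^4R(q^{10})\bigr)$, supported on residues $0,2,4\pmod 5$. The residue-$2$ piece $-q^2f_5f_{10}f_{50}$ rescales to $-f_1f_2f_{10}$, closing the loop.

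Given this, the induction is immediate: assuming $\sum_n b_{2,5}^4(4\cdot 5^{2j}n+a_j)q^n\equiv 2f_1f_2f_{10}\pmod 4$ with $a_j:=(13\cdot 5^{2j}-1)/6$, the index identity $4\cdot 5^{2j}(25n+13)+a_j=4\cdot 5^{2(j+1)}n+a_{j+1}$ together with the endomorphism property gives $\sum_n b_{2,5}^4(4\cdot 5^{2(j+1)}n+a_{j+1})q^n\equiv -2f_1f_2f_{10}\equiv 2f_1f_2f_{10}\pmod 4$, proving \eqref{thyn1}. For \eqref{thyn2} I would use the very same dissection data: because the second $5$-dissection has zero coefficient in residues $1$ and $3\pmod 5$, extracting $n\equiv 8\pmod{25}$ (residue $3$ then residue $1$) or $n\equiv 18\pmod{25}$ (residue $3$ then residue $3$) from $f_1f_2f_{10}$ vanishes, and solving $4\cdot 5^{2j}(25n+r)+a_j=4\cdot 5^{2j+2}n+(y\cdot 5^{2j+1}-1)/6$ forces $r=8$ when $y=41$ and $r=18$ when $y=89$. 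The main obstacle I anticipate is purely clerical: carefully bookkeeping the nine-term product expansion and confirming that the residue-$3$ class really does collapse to a single pure term, so that the self-reproducing structure of $f_1f_2f_{10}\pmod 4$ emerges; once that is verified, both families follow from a single induction.
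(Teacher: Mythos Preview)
Your proposal is correct and follows essentially the same approach as the paper: binomial reduction $f_j^4\equiv f_{2j}^2\pmod 4$ to get $f_2^2/f_{10}^2$, the $2$-dissection from squaring \eqref{f1_f5} for the base case, and then the two-step $5$-dissection via \eqref{f_1_dissection} applied to $f_1$ and to $f_2$ to run the induction and harvest the vanishing residues. Your bookkeeping of the nine-term expansion (in particular that residue $3\bmod 5$ collapses to the single pure term $q^3$) is exactly the mechanism the paper uses implicitly, and your arithmetic identification $r=8\leftrightarrow y=41$, $r=18\leftrightarrow y=89$ matches the paper's derivation of \eqref{thyn2} from the intermediate $5^{2j+1}$-level identity.
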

\begin{proof}
		Setting $\ell=2$, $m=5$ and $k=4$ in \eqref{main}, we obtain
		\begin{equation} \label{thyn1_0}
			\displaystyle \sum_{n=0}^\infty b_{2,5}^4(n)q^n = \frac{f_2^4 f_5^4}{f_1^4 f_{10}^4}.
		\end{equation}
		Using binomial theorems $f_1^4 \equiv f_2^2 \pmod{4}$ and $ f_5^4 \equiv  f_{10}^2 \pmod{4}$  in \eqref{thyn1_0}, we have
		\begin{equation} \label{thyn1_2}
			\displaystyle \sum_{n=0}^\infty b_{2,5}^4(n)q^{n}  \equiv \frac{ f_2^2 }{f_{10}^2} \pmod{4}. 
		\end{equation}
		extracting odd and even powers of $ q $ in equation \eqref{thyn1_2} we get
		\begin{align}
		 b_{2,5}^4(2n+1)q^{n}  \equiv 0 \pmod{4}, \\
			\displaystyle \sum_{n=0}^\infty b_{2,5}^4(2n)q^{2n}  \equiv \frac{ f_1^2 }{f_{5}^2} \pmod{4}.  \label{thyn1_3}
		\end{align}
		Squaring \eqref{f1_f5}, we have 
		\begin{equation}
			\frac{ f_1^2 }{f_5^2} =\frac{ f_2^2f_8^2 f_{20}^6 }{f_4^2 f_{10}^4f_{40}^2} +q^2\frac{ f_4^4 f_{40}^2 }{ f_8^2 f_{10}^2} -2q\frac{ f_2 f_4 f_{20}^3}{ f_{10}^4 }. \label{srr}
		\end{equation}
		Invoking \eqref{srr} in \eqref{thyn1_3} and extracting only odd powers of $ q $ we get 
		\begin{equation}
			\displaystyle \sum_{n=0}^\infty b_{2,5}^4(4n+2)q^{n}  \equiv 2f_1 f_2  f_{10} \pmod{4}.  \label{thyn1_4}
		\end{equation}
		Above equation \eqref{thyn1_4} proves the case of $ j=0 $ in equation \eqref{thyn1}.
		Now if the congruence \eqref{thyn1} holds for some integer $j \geq 0$ then 
		invoking \eqref{f_1_dissection} in \eqref{thyn1}, we get 
		\begin{align}
			\displaystyle \sum_{n=0}^\infty b_{2,5}^5\left( 4\times 5^{2j} n + \frac{13\times 5^{2j} -1}{6}\right) q^n &\equiv 2 f_{10} f_{25} f_{50}\left( R(q^5)^{-1}-q-q^2 R(q^5)\right) \nonumber \\ 
			&\quad \times \left( R(q^{10})^{-1}-q^2-q^4 R(q^{10})\right) \pmod{4}, \label{thyn1_5} 
		\end{align}
		Extracting only $q^{5n+3}$ terms on both sides of \eqref{thyn1_5}, we have
		\begin{align}
			\displaystyle \sum_{n=0}^\infty b_{2,5}^5\left( 4\times 5^{2j} (5n+3) + \frac{13\times 5^{2j} -1}{6}\right) q^{5n+3} &\equiv 2 q^3 f_{10} f_{25} f_{50} \pmod{4},\nonumber \\
			\displaystyle \sum_{n=0}^\infty b_{2,5}^4\left( 4\times 5^{2j+1}n + \frac{17\times 5^{2j+1} -1}{6}\right) q^n &\equiv 2 f_2 f_{5} f_{10} \pmod{4}.
		\end{align}
		Again, substituting \eqref{f_1_dissection} in the above equation, we have
		\begin{align} 
			\displaystyle \sum_{n=0}^\infty b_{2,5}^4\left( 4\times 5^{2j+1}n + \frac{17\times 5^{2j+1} -1}{6}\right) q^n & \equiv 2 f_{5} f_{10} f_{50}  \left(R(q^{10})^{-1}-q^2-q^4 R(q^{10})\right) \pmod{4}. \label{thyn1_6}
		\end{align}
		Extracting only $q^{5n+2}$ terms from \eqref{thyn1_6}, we have
		\begin{align}
			\displaystyle \sum_{n=0}^\infty b_{2,5}^4\left( 4\times 5^{2j+1}(5n+2) + \frac{17\times 5^{2j+1} -1}{6}\right) q^{5n+1} & \equiv -2q^2 f_{5} f_{10} f_{50} \pmod{4}. \nonumber \\
			\displaystyle \sum_{n=0}^\infty b_{2,5}^4\left( 4\times 5^{2j+2}n + \frac{13\times 5^{2j+2} -1}{6}\right) q^n & \equiv 2 f_{1} f_{2} f_{10} \pmod{4}.
		\end{align}
		The above equation proves the congruence \eqref{thyn1} for $j+1$, hence by induction \eqref{thyn1} is true. The congruence \eqref{thyn2} follows from \eqref{thyn1_6}. This completes the proof.
\end{proof}
    
\section{Congruences for $b^{2}_{2,5}(n)$}\label{s4}
\begin{thm}
    For $j \geq 0$, we have
    \begin{equation} \label{thm1}
         \displaystyle \sum_{n=0}^\infty b_{2,5}^2\left( 2\times 5^{2j} n + \frac{2\times 5^{2j} +1}{3}\right) q^n \equiv 2 f_1 f_2 f_5 \pmod{4},
    \end{equation}
    and for $t \in \{22,28\}$, we have 
    \begin{equation} \label{thm1__}
        b_{2,5}^2\left( 2\times 5^{2j+2} n + \frac{t\times 5^{2j+1} +1}{3}\right) \equiv 0 \pmod{4}.
    \end{equation}
\end{thm}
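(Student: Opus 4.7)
The argument will run in close parallel to the proof of the preceding $b_{2,5}^4$ theorem. I first reduce the generating function modulo $4$: the binomial congruences $f_1^4\equiv f_2^2\pmod 4$ and $f_5^4\equiv f_{10}^2\pmod 4$ give $\tfrac{f_2^2}{f_1^2}\equiv f_1^2\pmod 4$ and $\tfrac{f_5^2}{f_{10}^2}\equiv \tfrac{1}{f_5^2}\pmod 4$, so that
\[
\sum_{n=0}^\infty b_{2,5}^{2}(n)\,q^n=\frac{f_2^2 f_5^2}{f_1^2 f_{10}^2}\equiv \frac{f_1^2}{f_5^2}\pmod 4.
\]
Squaring \eqref{f1_f5} produces the 2-dissection \eqref{srr}; extracting the odd powers of $q$, replacing $q^{2n+1}$ by $q^n$, and simplifying the resulting quotient modulo $2$ (via $f_5^4\equiv f_{10}^2\pmod 2$) yields $\sum_{n=0}^\infty b_{2,5}^{2}(2n+1)\,q^n\equiv 2 f_1 f_2 f_5\pmod 4$, which is the $j=0$ case of \eqref{thm1}.

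For the inductive step, assume \eqref{thm1} at level $j$. Substitute \eqref{f_1_dissection} into $f_1$ and its $q\mapsto q^2$ analogue into $f_2$ on the right-hand side; this writes $2 f_1 f_2 f_5$ as $2 f_5 f_{25}f_{50}$ times a product of two Ramanujan trinomials in $R(q^5)$ and $R(q^{10})$. A direct expansion shows that, among the five residue classes modulo $5$ appearing in this product, class $3$ is occupied only by the single term $q^3$; hence extracting $q^{5n+3}$ collapses the right-hand side to $2 q^3 f_5 f_{25}f_{50}\pmod 4$. After dividing by $q^3$ and applying $q^5\mapsto q$, the indices on the left become $2\cdot 5^{2j+1}n+\tfrac{4\cdot 5^{2j+1}+1}{3}$ and the right-hand side becomes $2 f_1 f_5 f_{10}$. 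A second use of \eqref{f_1_dissection}, applied now only to $f_1$ in this intermediate identity, rewrites it as $2 f_5 f_{10}f_{25}\bigl(R(q^5)^{-1}-q-q^2 R(q^5)\bigr)\pmod 4$, a single trinomial occupying residue classes $0,1,2\pmod 5$. Extracting $q^{5n+1}$ (the $-q$ term), dividing by $q$, and applying $q^5\mapsto q$ returns precisely $2 f_1 f_2 f_5\pmod 4$ with the index advanced to $2\cdot 5^{2j+2}n+\tfrac{2\cdot 5^{2j+2}+1}{3}$, closing the induction.

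For \eqref{thm1__}: the residues $3$ and $4$ modulo $5$ are absent from the single trinomial $R(q^5)^{-1}-q-q^2 R(q^5)$, so extracting $q^{5n+3}$ or $q^{5n+4}$ from the intermediate congruence gives $0\pmod 4$. The corresponding indices work out to $2\cdot 5^{2j+2}n+\tfrac{(6r+4)\cdot 5^{2j+1}+1}{3}$ for $r=3,4$, matching exactly the offsets $t=22$ and $t=28$ in the theorem. The main pitfall is the exponent arithmetic across the two successive extractions $n\mapsto 5n+3$ then $n\mapsto 5n+r$; verifying that the offset evolves as $\tfrac{2\cdot 5^{2j}+1}{3}\to\tfrac{4\cdot 5^{2j+1}+1}{3}\to\tfrac{2\cdot 5^{2j+2}+1}{3}$ (and similarly to $\tfrac{t\cdot 5^{2j+1}+1}{3}$ for $t=22,28$) is mechanical but error-prone, and is handled in exact analogy with the $b_{2,5}^4$ proof above.
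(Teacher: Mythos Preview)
Your proposal is correct and follows essentially the same route as the paper: establish the base case $j=0$ by a $2$-dissection and then run the induction by two successive applications of the $5$-dissection \eqref{f_1_dissection} (first to $f_1$ and $f_2$, extracting $q^{5n+3}$; then to $f_1$ alone, extracting $q^{5n+1}$), with \eqref{thm1__} read off from the missing residue classes in the single trinomial. The only difference is cosmetic: for the base case the paper $2$-dissects the exact generating function via the square of \eqref{f5_f1} and then reduces modulo $4$, whereas you first reduce modulo $4$ to $f_1^2/f_5^2$ and then $2$-dissect via the square of \eqref{f1_f5}; both arrive at $2f_1f_2f_5\pmod 4$.
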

\begin{proof}
	Setting $\ell=2$, $m=5$ and $k=2$ in \eqref{main}, we obtain
    \begin{equation} \label{thm1_0}
        \displaystyle \sum_{n=0}^\infty b_{2,5}^2(n)q^n = \frac{f_2^2 f_5^2}{f_1^2 f_{10}^2}.
    \end{equation}
    Invoking \eqref{f5_f1} in \eqref{thm1_0} and extracting odd powers of $q$ on both sides of \eqref{thm1_0}, we get
     \begin{align}
         \displaystyle \sum_{n=0}^\infty b_{2,5}^2(2n+1)q^{2n+1} &= 2q \frac{ f_4^3 f_{20}}{f_2^3 f_{10}}. \nonumber \\
          \displaystyle \sum_{n=0}^\infty b_{2,5}^2(2n+1)q^{n} &=
         2\frac{ f_2^3 f_{10}}{f_1^3 f_{5}}.\\
         & \equiv 2\frac{f_1  f_2^3 f_5 f_{10}}{f_1^4 f_{5}^2} \pmod{4}. \label{thm1_1}
     \end{align} 
    Using binomial theorems $f_1^4 \equiv f_2^2 \pmod{4}$ and $2 f_5^2 \equiv 2 f_{10} \pmod{4}$  in \eqref{thm1_1}, we have
    \begin{equation} \label{thm1_2}
        \displaystyle \sum_{n=0}^\infty b_{2,5}^2(2n+1)q^{n}  \equiv 2 f_1  f_2 f_5\pmod{4}. 
    \end{equation}
    The above equation \eqref{thm1_2} proves the case for $j=0.$
    
    Now if the congruence \eqref{thm1} holds for some integer $j \geq 0$ then 
    invoking \eqref{f_1_dissection} in \eqref{thm1}, we get 
    \begin{align}
          \displaystyle \sum_{n=0}^\infty b_{2,5}^2\left( 2\times 5^{2j} n + \frac{2\times 5^{2j} +1}{3}\right) q^n &\equiv 2 f_5 f_{25} f_{50}\left( R(q^5)^{-1}-q-q^2 R(q^5)\right) \nonumber \\ 
         &\quad \times \left( R(q^{10})^{-1}-q^2-q^4 R(q^{10})\right)  \pmod{4}, \label{thm1_3} 
    \end{align}
Extracting only $q^{5n+3}$ terms on both sides of \eqref{thm1_3}, we have
\begin{align}
    \displaystyle \sum_{n=0}^\infty b_{2,5}^2\left( 2\times 5^{2j} (5n+3) + \frac{2\times 5^{2j} +1}{3}\right) q^{5n+3} &\equiv 2 q^3 f_5 f_{25} f_{50} \pmod{4},\nonumber \\
     \displaystyle \sum_{n=0}^\infty b_{2,5}^2\left( 2\times 5^{2j+1}n + \frac{4\times 5^{2j+1} +1}{3}\right) q^n &\equiv 2 f_1 f_{5} f_{10} \pmod{4}.
\end{align}
Again, substituting \eqref{f_1_dissection} in the above equation, we have
\begin{align} 
    \displaystyle \sum_{n=0}^\infty b_{2,5}^2\left( 2\times 5^{2j+1}n + \frac{4\times 5^{2j+1} +1}{3}\right) q^n & \equiv 2 f_{5} f_{10} f_{25} \left(R(q^5)^{-1}-q-q^2 R(q^5)\right) \pmod{4}. \label{thm1_4}
\end{align}
Extracting only $q^{5n+1}$ terms from \eqref{thm1_4}, we have
\begin{align}
    \displaystyle \sum_{n=0}^\infty b_{2,5}^2\left( 2\times 5^{2j+1}(5n+1) + \frac{4\times 5^{2j+1} +1}{3}\right) q^{5n+1} & \equiv -2q f_{5} f_{10} f_{25} \pmod{4}. \nonumber \\
     \displaystyle \sum_{n=0}^\infty b_{2,5}^2\left( 2\times 5^{2j+2}n + \frac{2\times 5^{2j+2} +1}{3}\right) q^n & \equiv 2 f_{1} f_{2} f_{5} \pmod{4}.
\end{align}
The above equation proves the congruence \eqref{thm1} for $j+1$, hence by induction \eqref{thm1} is true. Congruence \eqref{thm1__} follows from \eqref{thm1_4}. This completes the proof.
\end{proof}

\section{Congruences for $b^{2}_{5, 4t}(n)$}\label{s5}
\begin{thm} For any nonnegative integer $n$ and $5 \nmid t$ and, we have 
\begin{align} 
 b^{2}_{5, 4t}(4n+3) &\equiv 0 \pmod{10} .\label{tm5}
\end{align}
\end{thm}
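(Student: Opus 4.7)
The plan is to prove the congruence by establishing it separately modulo $2$ and modulo $5$ and then invoking the Chinese Remainder Theorem. Setting $\ell=5$, $m=4t$, $k=2$ in \eqref{main} (the hypothesis $5\nmid t$ guarantees $\gcd(5,4t)=1$, so the generating function is well defined), we obtain
$$\sum_{n=0}^{\infty} b^{2}_{5,4t}(n)\, q^n = \frac{f_5^2 f_{4t}^2}{f_1^2 f_{20t}^2}.$$
For the modulo $2$ part, I would apply the congruence $f_j^2 \equiv f_{2j} \pmod 2$ from \eqref{bt2} to each of the four squared factors; the right-hand side collapses to $f_{10} f_{8t}/(f_2 f_{40t}) \pmod 2$, which is a power series in $q^2$. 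Hence $b^{2}_{5,4t}(2n+1) \equiv 0 \pmod 2$ for all $n$, and in particular $b^{2}_{5,4t}(4n+3) \equiv 0 \pmod 2$.

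For the modulo $5$ part, I would first square the $2$-dissection \eqref{f5_f1} of $f_5/f_1$. Writing $A = \frac{f_8 f_{20}^2}{f_2^2 f_{40}}$ and $B = \frac{f_4^3 f_{10} f_{40}}{f_2^3 f_8 f_{20}}$, a direct computation gives $AB = \frac{f_4^3 f_{10} f_{20}}{f_2^5}$, so
$$\frac{f_5^2}{f_1^2} = A^2 + 2q\,\frac{f_4^3 f_{10} f_{20}}{f_2^5} + q^2 B^2.$$
Because $f_{4t}^2/f_{20t}^2$ is a power series in $q^{4t}$, and so in $q^2$, extracting the odd-indexed coefficients from $\sum b^{2}_{5,4t}(n)q^n$ picks up only the middle cross term, yielding
$$\sum_{n=0}^{\infty} b^{2}_{5,4t}(2n+1)\, q^n = \frac{2 f_2^3 f_5 f_{10} f_{2t}^2}{f_1^5 f_{10t}^2}.$$
Now reduce modulo $5$: since $f_1^5 \equiv f_5 \pmod 5$ by the binomial theorem and $1/f_1^5 \in \mathbb{Z}[[q]]$, we have $f_5/f_1^5 \equiv 1 \pmod 5$, and therefore
$$\sum_{n=0}^{\infty} b^{2}_{5,4t}(2n+1)\, q^n \equiv \frac{2 f_2^3 f_{10} f_{2t}^2}{f_{10t}^2} \pmod 5.$$
Every factor on the right is of the form $f_{2j}$, so this is a power series in $q^2$; extracting coefficients of $q^{2n+1}$ gives $b^{2}_{5,4t}(4n+3) \equiv 0 \pmod 5$.

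Combining the two conclusions via CRT yields $b^{2}_{5,4t}(4n+3) \equiv 0 \pmod{10}$. I do not expect a substantive obstacle: no new dissection identity is required beyond \eqref{f5_f1} and \eqref{bt2}, and the argument parallels the $\pmod 4$ extractions used earlier in the paper (e.g., the treatment of $b^{2}_{3,2t}$ via \eqref{f3.2_f1.2}). The only book-keeping point that needs care is the squaring of \eqref{f5_f1} and the verification that the cross term simplifies cleanly to $\frac{f_4^3 f_{10} f_{20}}{f_2^5}$, but this is a direct computation.
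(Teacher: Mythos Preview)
Your proposal is correct and follows essentially the same route as the paper: both square \eqref{f5_f1} to extract the odd part $\sum b^{2}_{5,4t}(2n+1)q^n = 2\,f_2^3 f_5 f_{10} f_{2t}^2/(f_1^5 f_{10t}^2)$ and then invoke $f_1^5\equiv f_5\pmod 5$ to reduce this to a series in $q^2$. The only cosmetic difference is that the paper packages the mod $2$ and mod $5$ steps together via $2f_5\equiv 2f_1^5\pmod{10}$, whereas you treat the mod $2$ case separately with \eqref{bt2} and finish with CRT; neither approach requires any idea the other lacks.
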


\begin{proof}
Setting $\ell=4t$, $m=5$ and $k=2$ in \eqref{main}, we have
\begin{equation}
\displaystyle \sum_{n=0}^\infty  b^{2}_{4t,5}(n)q^n = \frac{f_{4t}^2 f_{5}^2}{f_1^2 f_{20t}^2}.\label{4t5}
\end{equation}
Substituting \eqref{f5_f1} in the above equation, we have 
\begin{align}
\displaystyle \sum_{n=0}^\infty  b^{2}_{5, 4t}(n)q^n 
&= \frac{f_{4t}^2 f_8^2 f_{20}^4}{f_{20t}^2 f_2^4 f_{40}^2}
+q^2\frac{f_{4t}^2 f_4^6  f_{10}^2 f_{40}^2}{f_{20t}^2 f_2^6 f_8^2 f_{20}^2} +2q\frac{f_{4t}^2 f_4^3 f_{10} f_{20}}{f_{20t}^2 f_2^5 }.\label{tm5_1}
\end{align}
Extracting only $q^{2n+1}$ terms from \eqref{tm5_1}, we get
\begin{align}
\displaystyle \sum_{n=0}^\infty  b^{2}_{5, 4t}(2n+1)q^n &=
2\frac{f_{2t}^2  f_2^3 f_5 f_{10}}{f_{10t}^2 f_1^5}. \label{tm5_2}
\end{align}
Applying binomial theorem $ 2f_5 \equiv 2 f_1^5 \pmod{10} $ in \eqref{tm5_2}, we have
\begin{align}
\displaystyle \sum_{n=0}^\infty b^{2}_{5, 4t}(2n+1)q^n 
&\equiv 2\frac{f_{2t}^2  f_2^3 f_5 f_{10}}{f_{10t}^2 f_1^5} \pmod{10}, \nonumber\\
&\equiv 2\frac{f_{2t}^2  f_2^3  f_{10}}{f_{10t}^2 } \pmod{10}.\label{l2}
\end{align}
Congruence \eqref{tm5} follows from \eqref{l2}.
\end{proof}

\begin{thm}
For any integer  $n\ge0$, we have
\begin{equation} \label{b2_45_2n1}
b^{2}_{4,5}(8n+7) \equiv 0 \pmod{40}. 
\end{equation}
\end{thm}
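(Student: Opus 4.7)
The plan is to decompose the target congruence modulo $40$ into its prime-power components modulo $8$ and modulo $5$ (equivalently, modulo $10$), and to read off the mod $10$ half from the previous theorem. Specifically, the previous theorem says $b^{2}_{5,4t}(4n+3) \equiv 0 \pmod{10}$ whenever $5 \nmid t$, and the case $t=1$, together with the symmetry $b^{2}_{5,4}(n) = b^{2}_{4,5}(n)$, yields $b^{2}_{4,5}(4n+3) \equiv 0 \pmod{10}$. The progression $8n+7 = 4(2n+1)+3$ is contained in this, so $b^{2}_{4,5}(8n+7) \equiv 0 \pmod{10}$ holds for free. Since $\operatorname{lcm}(8,10)=40$, it suffices to upgrade the mod $2$ part to a mod $8$ congruence, that is, to prove $b^{2}_{4,5}(8n+7) \equiv 0 \pmod{8}$.

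For the mod $8$ half, I would start from \eqref{tm5_2} with $t=1$, which gives
\begin{equation*}
\sum_{n=0}^\infty b^{2}_{4,5}(2n+1)\,q^n \;=\; 2\,\frac{f_2^5 f_5}{f_1^5 f_{10}},
\end{equation*}
and perform two successive $2$-dissections modulo $4$; the outer factor of $2$ then promotes every congruence to modulo $8$. For the first dissection, factor $f_5/f_1^5 = (1/f_1^4)(f_5/f_1)$ and substitute \eqref{1_f1.4} and \eqref{f5_f1}. Working modulo $4$ only the principal piece $1/f_1^4 \equiv f_4^{14}/(f_2^{14}f_8^4) \pmod{4}$ survives. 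Collecting terms and extracting the $q^{2n+1}$ part should yield
\begin{equation*}
\sum_{n=0}^\infty b^{2}_{4,5}(4n+3)\,q^n \;\equiv\; 2\,\frac{f_2^{17} f_{20}}{f_1^{12}\,f_4^{5}\,f_{10}} \pmod{8}.
\end{equation*}

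The decisive step is the second dissection. Cubing \eqref{1_f1.4} shows that
$1/f_1^{12} \equiv \bigl(f_4^{14}/(f_2^{14}f_8^{4})\bigr)^{3} \pmod{4}$,
since all binomial cross-terms in $(X+4qY)^3$ carry the factors $12$, $48$, or $64$, each divisible by $4$. The resulting expression for $f_2^{17} f_{20}/(f_1^{12} f_4^5 f_{10})$ modulo $4$ then contains only $f_{2k}$'s, hence only even powers of $q$. Extracting the $q^{2m+1}$ coefficient—which corresponds precisely to $b^{2}_{4,5}(8m+7)$—therefore gives $0 \pmod{8}$. Combining with the mod $10$ congruence via CRT delivers $b^{2}_{4,5}(8n+7) \equiv 0 \pmod{40}$, as required.

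The main obstacle is purely algebraic bookkeeping: tracking the exponents of $f_2, f_4, f_8, f_{10}, f_{20}, f_{40}$ through both dissections and verifying that the final expression factors through $q \mapsto q^{1/2}$ without introducing fractional powers. Once the reduction $1/f_1^{12} \equiv (f_4^{14}/f_2^{14}f_8^4)^3 \pmod{4}$ is in hand, the crucial parity observation—that the remaining expression is supported on even powers of $q$—makes the vanishing transparent.
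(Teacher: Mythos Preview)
Your proposal is correct and follows essentially the same strategy as the paper: split the modulus $40=\operatorname{lcm}(8,10)$, pull the mod~$10$ half from the previous result $b^{2}_{5,4}(4n+3)\equiv 0\pmod{10}$, and for the mod~$8$ half show that $\sum b^{2}_{4,5}(4n+3)q^n$ reduces modulo~$8$ to twice a series in~$q^2$. The only cosmetic difference is that the paper simplifies further to $2f_2^{3}f_{20}\pmod 8$ (via $f_1^{4}\equiv f_2^{2}\pmod 4$) rather than your $2\,\dfrac{f_4^{37}f_{20}}{f_2^{25}f_8^{12}f_{10}}$ obtained by cubing \eqref{1_f1.4}; both are supported on even powers, so either yields $b^{2}_{4,5}(8n+7)\equiv 0\pmod 8$, and the paper's cleaner form is then reused in later theorems.
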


\begin{proof}
Substituting $t=1$ in \eqref{4t5} and invoking \eqref{f5_f1}, we have
\begin{align}
\displaystyle \sum_{n=0}^\infty b^{2}_{4,5}(n)q^n 
&= \frac{f_4^2 f_8^2 f_{20}^4}{f_{20}^2 f_2^4 f_{40}^2}
+q^2\frac{f_4^8  f_{10}^2 f_{40}^2}{f_2^6 f_8^2 f_{20}^4} +2q\frac{f_4^5 f_{10} }{f_2^5 f_{20}}. \label{tm3_1}
\end{align}
Extracting odd powers of $q$ on both sides of \eqref{tm3_1}, we get
\begin{align}
\displaystyle \sum_{n=0}^\infty b^{2}_{4,5}(2n+1)q^{n} &= 2\frac{f_2^5 f_{5} }{f_1^5 f_{10}}=\dfrac{f_2^5}{f_{10}}\dfrac{1}{f_1^4}\dfrac{f_5}{f_1} \label{tm3_2}.
\end{align}
Using \eqref{1_f1.4} and \eqref{f5_f1} in \eqref{tm3_2} and extracting only odd terms, we have
 \begin{align}
\displaystyle \sum_{n=0}^\infty b^{2}_{4,5}(4n+3)q^{n} 
&= 2\frac{f_2^{17} f_{20} }{f_1^{12} f_{4}^4}+8\frac{f_4^5 f_2^2 f_{10}^2}{f_1^{12} f_{20}}, \nonumber\\
& \equiv 2\frac{f_2^{17} f_{20} }{f_1^{12} f_{4}^4} \pmod{8}. \label{tm4_0}
\end{align}
By binomial theorem, we have
\begin{align}
\displaystyle \sum_{n=0}^\infty b^{2}_{4,5}(4n+3)q^{n} 
    \equiv 2 f_2^{3} f_{20} \pmod{8}.\label{tm4_1}
    \end{align}
which implies that
\begin{equation}\label{tm3_42}
b^{2}_{4,5}(8n+7)\equiv 0 \pmod{8}.
\end{equation}
Thanks to the binomial theorem, we obtain
\begin{align}\label{tm3_41}
\displaystyle \sum_{n=0}^\infty b^{2}_{4,5}(2n+1)q^{n}\equiv 2\frac{f_{10} f_{5} }{f_5 f_{10}} \equiv 2 \pmod{10}
\end{align}
Now as  $b^{2}_{3,4}(1)=2$, congruence \eqref{b2_45_2n1} follows from \eqref{tm3_42} and \eqref{tm3_41}.
\end{proof}

\begin{thm}
    For $j \geq 0$, we have
    \begin{equation} \label{thm2}
         \displaystyle \sum_{n=0}^\infty b_{4,5}^2\left( 8\times 5^{2j} n + \frac{13\times 5^{2j} -4}{3}\right) q^n \equiv 2 f_1^3 f_{10} \pmod{8}.
    \end{equation}
    and for $u \in \{61,109\}$ and $v \in \{41,89\}$, we have 
    \begin{equation} \label{thm2_}
         b_{4,5}^2\left( 8\times 5^{2j+1} n + \frac{u\times 5^{2j} -4}{3}\right) q^n \equiv0 \pmod{20},
    \end{equation}
    and 
     \begin{equation} \label{thm2__}
        b_{4,5}^2\left( 8\times 5^{2j+2} n + \frac{v\times 5^{2j+1} -4}{3}\right) q^n \equiv0 \pmod{20}.
    \end{equation}
\end{thm}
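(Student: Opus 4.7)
I plan to proceed by induction on $j$, with (\ref{thm2}) as the main invariant; the congruences (\ref{thm2_}) and (\ref{thm2__}) will fall out as vanishing-residue by-products of the very $5$-dissection that drives the induction. For the base case $j=0$, equation (\ref{tm4_1}) already reads $\sum_{n\ge 0} b_{4,5}^2(4n+3)q^n \equiv 2 f_2^3 f_{20}\pmod 8$, and since the right-hand side is a power series in $q^2$, extracting even-index terms (and sending $q^2\mapsto q$) gives (\ref{thm2}) at $j=0$ directly.

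For the inductive step I would combine two $5$-dissections. First, Jacobi's identity $f_1^3=\sum_{n\ge 0}(-1)^n(2n+1)q^{n(n+1)/2}$ shows that the triangular-number exponents hit only residues $\{0,1,3\}\pmod 5$, with residue-$3$ piece equal to $5q^3 f_{25}^3$ (obtained by writing $n=5k+2$, so that the exponent becomes $3+25k(k+1)/2$ and the coefficient $10k+5=5(2k+1)$). Second, substituting $q\mapsto q^2$ in (\ref{f_1_dissection}) yields $f_2=f_{50}\bigl(R(q^{10})^{-1}-q^2-q^4 R(q^{10})\bigr)$, so $f_2$ supports only residues $\{0,2,4\}\pmod 5$, with residue-$2$ piece $-q^2 f_{50}$. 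Since $f_{10}$ is a series in $q^{10}$ (residue $0$), the right-hand side $2 f_1^3 f_{10}$ supports only residues $\{0,1,3\}\pmod 5$; extracting $q^{5n+2}$ and $q^{5n+4}$ from the level-$j$ hypothesis therefore yields $0\pmod 8$, and the algebra $8\cdot 5^{2j}(5n+r)+\tfrac{13\cdot 5^{2j}-4}{3}=8\cdot 5^{2j+1}n+\tfrac{(24r+13)\cdot 5^{2j}-4}{3}$ identifies $r=2$ with $u=61$ and $r=4$ with $u=109$ in (\ref{thm2_}).

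Extracting the $q^{5n+3}$ piece of $2 f_1^3 f_{10}$ produces $10\,q^3 f_{25}^3 f_{10}$, which after $q^5\mapsto q$ becomes $\sum c_j(5n+3)q^n\equiv 2 f_2 f_5^3\pmod 8$ (using $10\equiv 2\pmod 8$), where $c_j(m):=b_{4,5}^2\bigl(8\cdot 5^{2j}m+(13\cdot 5^{2j}-4)/3\bigr)$. The product $f_2 f_5^3$ also supports only residues $\{0,2,4\}$ (since $f_5^3$ lives in $q^5$, residue $0$), so extracting $q^{5n+1}$ and $q^{5n+3}$ gives $0\pmod 8$, and the analogous arithmetic $8\cdot 5^{2j+2}n+\tfrac{(24r_2+17)\cdot 5^{2j+1}-4}{3}$ identifies $r_2=1$ with $v=41$ and $r_2=3$ with $v=89$ in (\ref{thm2__}). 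Extracting $q^{5n+2}$ from $2 f_2 f_5^3$, now using the residue-$2$ formula $-q^2 f_{50}$ for $f_2$, gives $\sum c_j(25n+13)q^n \equiv 2 f_1^3 f_{10}\pmod 8$ (after sign tracking); since $c_j(25n+13)=c_{j+1}(n)$, this is precisely (\ref{thm2}) at level $j+1$, closing the induction.

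Finally, to strengthen the mod-$8$ conclusions of (\ref{thm2_}) and (\ref{thm2__}) to mod $20$: because $5^{2j}\equiv 1\pmod{12}$, a direct check shows every listed index is $\equiv 3\pmod 4$, so (\ref{tm5}) with $t=1$ gives divisibility by $10$; combined with the mod-$8$ divisibility already obtained, the indices are divisible by $\operatorname{lcm}(8,10)=40$, and in particular by $20$. The main obstacle is the careful sign and index bookkeeping through two layers of $5$-dissection: verifying that the $(q^{5n+3},q^{5n+2})$ extraction faithfully reproduces the shape $2 f_1^3 f_{10}$ at the next level (rather than $-2 f_1^3 f_{10}$, which would merely give mod $4$) is the one step that demands genuine computation, and it rests squarely on the clean residue-$2$ formula $-q^2 f_{50}$ for $f_2$ together with the factor of $5$ in the residue-$3$ formula $5q^3 f_{25}^3$ for $f_1^3$.
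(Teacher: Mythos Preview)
Your proof is correct and follows essentially the same two-step $5$-dissection/induction scheme as the paper: the paper also starts from (\ref{tm4_1}) for the base case, passes through the intermediate identity $\sum b_{4,5}^2(8\cdot 5^{2j+1}n+\cdots)q^n\equiv 2f_2 f_5^3\pmod 8$, and then combines the resulting mod-$8$ vanishing with the mod-$10$ information from the previous theorem to reach mod $20$. The only cosmetic difference is that the paper obtains the $5$-dissection of $f_1^3$ by literally cubing (\ref{f_1_dissection}) and expanding $(R(q^5)^{-1}-q-q^2R(q^5))^3=R^{-3}-3qR^{-2}+5q^3-3q^5R^2-q^6R^3$, whereas you read off the same residue-$3$ piece $5q^3 f_{25}^3$ directly from Jacobi's identity; the two computations are equivalent and your index bookkeeping and mod-$10$ check via (\ref{tm5}) are in fact more explicit than the paper's one-line appeal to (\ref{b2_45_2n1}).
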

\begin{proof}
 From \eqref{tm4_1}, we have   
\begin{align}
\displaystyle \sum_{n=0}^\infty b^{2}_{4,5}(8n+3)q^{n}
&\equiv 2f_1^3 f_{10} \pmod{8}. \label{thm2_00}
\end{align}
The above equation proves \eqref{thm2} for $j=0$, Now suppose congruence \eqref{thm2} holds for some $j\geq 0$. Substituting cube of \eqref{f_1_dissection} in \eqref{thm2}, we get 
\begin{align}
    \displaystyle \sum_{n=0}^\infty b_{4,5}^2\left( 8\times 5^{2j} n + \frac{13\times 5^{2j} -4}{3}\right) q^n 
    &\equiv 2 f_{10} f_{25}^3 \nonumber \left( R(q^5)^{-1}-q-q^2 R(q^5)\right)^3 
    &\nonumber \\ 
    &\equiv 2 f_{10} f_{25}^3 \nonumber \\
    & \quad \times {\big[} R(q^5)^{-3}-3qR(q^5)^{-2} +5q^3 - 3 q^5 R(q^5)^2 -q^6 R(q^5)^3 {\big]} \nonumber \\
    & \quad \pmod{8}. \label{thm2_3} 
\end{align}
Extracting $q^{5n+3}$ from \eqref{thm2_3}, we get 
\begin{align}
\displaystyle \sum_{n=0}^\infty b_{4,5}^2\left( 8\times 5^{2j+1}n + \frac{17\times 5^{2j+1} -4}{3}\right) q^{n} 
    & \equiv 2 f_2 f_5^3 \pmod{8}. \label{thm2_4}
\end{align}
Invoking \eqref{f_1_dissection} in \eqref{thm2_4}, we get
\begin{align}
    \displaystyle \sum_{n=0}^\infty b_{4,5}^2\left( 8\times 5^{2j+1}n + \frac{17\times 5^{2j+1} -4}{3}\right) q^{n} 
    & \equiv 2 f_5^3 f_{50}\left( R(q^{10})^{-1}-q^2-q^4 R(q^{10})\right) \pmod{8}. \label{thm2_5}
\end{align}
Again, extracting the terms containing $q^{5n+2}$ from both sides of \eqref{thm2_5}, we get
\begin{align}
   \displaystyle \sum_{n=0}^\infty b_{4,5}^2\left( 8\times 5^{2j+2}n + \frac{13\times 5^{2j+2} -4}{3}\right) q^{n} 
    & \equiv 2 f_1^3 f_{10}\pmod{8}. \label{thm2_6}
\end{align}
So we obtain \eqref{thm2} with $j$ replaced by $j+1$. Extracting  $q^{5n+2},q^{5n+4}$ from \eqref{thm2_3} and \eqref{thm2_5} has no $q^{5n+1},q^{5n+3}$ terms and from comparing the arising equations with equation \eqref{b2_45_2n1}, we arrive at \eqref{thm2_} and \eqref{thm2__} respectively.
\end{proof}
\begin{thm}
If $n$ cannot be represented as the sum of a triangular number and ten times a pentagonal number, then $b^{2}_{4,5}(8n+3)\equiv 0 \pmod{8}.$
\end{thm}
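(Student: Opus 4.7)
The plan is to apply Jacobi's identity and Euler's pentagonal number theorem to the congruence already established in the proof of the preceding theorem, namely
\begin{equation*}
\sum_{n=0}^\infty b^{2}_{4,5}(8n+3)\,q^{n} \equiv 2 f_1^3 f_{10} \pmod{8},
\end{equation*}
which is equation \eqref{thm2_00}. Once we expand both factors into manifest sums indexed by triangular numbers and pentagonal numbers, the conclusion will follow by simply comparing coefficients of $q^n$ on both sides.

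First I would invoke Jacobi's identity
\begin{equation*}
f_1^3 = \sum_{k=0}^{\infty} (-1)^k (2k+1)\, q^{k(k+1)/2},
\end{equation*}
whose exponents are exactly the triangular numbers $T_k = k(k+1)/2$. Next I would apply Euler's pentagonal number theorem to $f_1$ and then replace $q$ by $q^{10}$ to obtain
\begin{equation*}
f_{10} = \sum_{l=-\infty}^{\infty} (-1)^l q^{10\cdot l(3l-1)/2} = \sum_{l=-\infty}^{\infty} (-1)^l q^{5l(3l-1)},
\end{equation*}
whose exponents are exactly $10$ times the pentagonal numbers $P_l = l(3l-1)/2$.

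Multiplying the two expansions gives
\begin{equation*}
2 f_1^3 f_{10} = \sum_{k \ge 0,\, l \in \mathbb{Z}} 2(-1)^{k+l}(2k+1)\, q^{T_k + 10 P_l},
\end{equation*}
so that, modulo $8$,
\begin{equation*}
\sum_{n=0}^\infty b^{2}_{4,5}(8n+3)\,q^{n} \equiv \sum_{k \ge 0,\, l \in \mathbb{Z}} 2(-1)^{k+l}(2k+1)\, q^{T_k + 10 P_l} \pmod{8}.
\end{equation*}
If $n$ admits no representation of the form $n = T_k + 10 P_l$ with $k \geq 0$ and $l \in \mathbb{Z}$, then no term on the right contributes to $q^n$, so the coefficient vanishes and $b^{2}_{4,5}(8n+3) \equiv 0 \pmod{8}$. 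The only non-routine step is recognising that the required representation theorem for $f_1^3$ is Jacobi's identity (rather than some theta-function identity of Ramanujan), after which the argument is essentially mechanical; I do not foresee a genuine obstacle.
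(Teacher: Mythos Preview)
Your argument is correct and reaches the same conclusion as the paper, but the treatment of the factor $f_1^3$ differs. The paper does not invoke Jacobi's identity; instead, starting from \eqref{tm4_1} it extracts even terms and then uses the binomial congruence $2f_1^4 \equiv 2f_2^2 \pmod{8}$ to replace $2f_1^3$ by $2\psi(q)=2f_2^2/f_1$, whose expansion $\sum_{k\ge 0} q^{k(k+1)/2}$ is supported on triangular numbers with all coefficients equal to $1$. Your route via Jacobi's identity $f_1^3=\sum_{k\ge 0}(-1)^k(2k+1)q^{k(k+1)/2}$ is exact rather than a congruence, and so is slightly more direct; the paper's route has the cosmetic advantage that the coefficients on the triangular-number side are all $1$, which makes the ``no representation $\Rightarrow$ coefficient vanishes'' step visually cleaner. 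Either way the support of the right-hand side is $\{T_k+10P_l\}$, so both arguments are equivalent in substance.
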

\begin{proof}
    Extracting even terms on both sides of the equation \eqref{tm4_1} and using the binomial theorem, we get
    \begin{align}
\displaystyle \sum_{n=0}^\infty b^{2}_{4,5}(8n+3)q^{n} 
    \equiv 2 \psi(q) f_{10} \pmod{8}.\label{tm4_12}
    \end{align}
    \begin{align}
\displaystyle \sum_{n=0}^\infty b^{2}_{4,5}(8n+3)q^{n} 
    \equiv \sum_{l,k=-\infty}^{\infty}q^{\frac{l(l+1)}{2}+\frac{10k(3k+1)}{2}}\pmod{8}. \label{tm4_14}
    \end{align}
    The result follows.
\end{proof}

\begin{thm} For prime $p\geq 5$ , $\left(\frac{-30}{p}\right)=-1$ and $i=1,2,\dots,p-1$, we have
    \begin{equation}
         b_{4,5}^2\left( 8\times p^{2j+2}n + \frac{(24i+13p)\times p^{2j+1} -4}{3}\right) q^{n} 
        \equiv 0 \pmod{20}. \label{thm4_}
    \end{equation}
\end{thm}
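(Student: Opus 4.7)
The plan is to follow closely the two corollaries earlier in this section (those handling $b_{3,4}^{4}(16n+3)$ and $b_{3,2}^{4}(16n+5)$): convert the mod-$8$ congruence \eqref{tm4_14} into a representability question for a binary quadratic form, compute a $p$-adic valuation, and use the Legendre-symbol hypothesis to rule out a representation. Starting from \eqref{tm4_12}, expanding $2\psi(q)f_{10}$ via $\psi(q)=\sum_{l\ge 0}q^{l(l+1)/2}$ and Euler's pentagonal-number theorem for $f_{10}$, and completing the square through the identity $24\bigl(l(l+1)/2+5k(3k-1)\bigr)+13=3(2l+1)^{2}+10(6k-1)^{2}$, I obtain
\[
\sum_{n\ge 0} b_{4,5}^{2}(8n+3)\,q^{24n+13} \equiv 2\sum_{l\ge 0,\ k\in\mathbb{Z}} (-1)^{k}\, q^{3(2l+1)^{2}+10(6k-1)^{2}} \pmod{8}.
\]
In particular $b_{4,5}^{2}(8m+3)\equiv 0\pmod 8$ whenever $24m+13$ admits no representation of the form $3a^{2}+10b^{2}$.

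Setting $m=p^{2j+2}n+\bigl((24i+13p)p^{2j+1}-13\bigr)/24$ (an integer, since $p^{2j+2}\equiv 1\pmod{24}$ for every prime $p\ge 5$), a direct substitution yields $24m+13=p^{2j+1}\bigl(24(pn+i)+13p\bigr)$; because $1\le i\le p-1$ and $\gcd(24,p)=1$, the bracketed factor is congruent to $24i\not\equiv 0\pmod p$, so $\nu_{p}(24m+13)=2j+1$ is odd. The only genuinely new ingredient, and what I expect to be the main obstacle, is then the following quadratic-residue lemma (parallel to the ones already used for $x^{2}+2y^{2}$ and $x^{2}+6y^{2}$): if $p\ge 5$ is prime with $\bigl(\tfrac{-30}{p}\bigr)=-1$, then $\nu_{p}(N)$ is even for every integer $N=3a^{2}+10b^{2}$. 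Indeed, $p\mid 3a^{2}+10b^{2}$ with $p\nmid b$ would force $(a/b)^{2}\equiv -10\cdot 3^{-1}\pmod p$, but $\bigl(\tfrac{-10\cdot 3^{-1}}{p}\bigr)=\bigl(\tfrac{-30}{p}\bigr)=-1$, a contradiction; hence $p\mid b$, and because $p\ne 3$ also $p\mid a$, so $N/p^{2}=3(a/p)^{2}+10(b/p)^{2}$ and the descent shows $\nu_{p}(N)$ is even. The odd valuation just computed rules out any representation $24m+13=3a^{2}+10b^{2}$, and therefore $b_{4,5}^{2}(8m+3)\equiv 0\pmod 8$.

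Finally I promote this to the claimed mod-$20$ statement. The previous step gives $b_{4,5}^{2}(8m+3)\equiv 0\pmod 4$. On the other hand \eqref{tm5} with $t=1$ gives $b_{5,4}^{2}(4k+3)\equiv 0\pmod{10}$ for every $k\ge 0$, and taking $k=2m$ yields $b_{4,5}^{2}(8m+3)\equiv 0\pmod 5$. Since $\gcd(4,5)=1$, the two congruences combine to $b_{4,5}^{2}(8m+3)\equiv 0\pmod{20}$, which is precisely \eqref{thm4_}.
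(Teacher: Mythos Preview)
Your proof is correct and follows exactly the route the paper indicates (the paper omits all details, merely pointing back to the argument of the earlier corollary for $b_{3,4}^{4}$). Your identification of the relevant binary form $3x^{2}+10y^{2}$, the completion-of-the-square identity $24m+13=3(2l+1)^{2}+10(6k-1)^{2}$, the valuation computation $\nu_{p}(24m+13)=2j+1$, and the descent using $\bigl(\tfrac{-30}{p}\bigr)=-1$ are all accurate; in fact, since your argument actually gives $b_{4,5}^{2}(8m+3)\equiv 0\pmod{8}$ (not merely $\pmod 4$), combining with the mod-$5$ information from \eqref{tm5} you obtain the stronger congruence $\pmod{40}$.
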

\begin{proof}
 Since the proof of equation \eqref{thm4_} follows a similar argument as that of \eqref{cks1}, we omit the details.
 
 \end{proof}
 
 \begin{thm}
 	For any positive integer $n$, we have
 	\begin{align}
 		b^{2}_{8,5}(4n+1) \equiv 2 b^{2}_{2,5}(n) \pmod{10}. \label{tm10}
 	\end{align}
 \end{thm}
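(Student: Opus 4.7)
The plan is to dissect the generating function of $b^{2}_{8,5}(n)$ so that the $(4n+1)$-coefficients can be expressed in closed form modulo $10$, and then recognize the resulting series as a congruent multiple of the generating function of $b^{2}_{2,5}(n)$. Concretely, I would set $\ell=5$, $m=8$, $k=2$ in \eqref{main} (equivalently put $t=2$ in \eqref{4t5}) to obtain
\[
\sum_{n=0}^{\infty} b^{2}_{8,5}(n)\,q^{n}=\frac{f_{5}^{2}f_{8}^{2}}{f_{1}^{2}f_{40}^{2}},
\]
and then square the $2$-dissection \eqref{f5_f1} so as to expand $f_{5}^{2}/f_{1}^{2}$ as a sum of three terms with $q$-powers $q^{0},q^{1},q^{2}$. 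Extracting the odd-power terms yields an identity of the shape
\[
\sum_{n=0}^{\infty} b^{2}_{8,5}(2n+1)\,q^{n}=2\,\frac{f_{4}^{2}f_{2}^{3}f_{5}f_{10}}{f_{1}^{5}f_{20}^{2}}.
\]

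The next step is the mod $10$ reduction. By the binomial theorem, $f_{1}^{5}\equiv f_{5}\pmod{5}$, so that the power series $f_{5}/f_{1}^{5}$ has the form $1+5\,C(q)$ with $C(q)\in\mathbb{Z}[[q]]$; multiplying by $2$ kills the $5C$ contribution modulo $10$. This gives
\[
\sum_{n=0}^{\infty} b^{2}_{8,5}(2n+1)\,q^{n}\equiv 2\,\frac{f_{4}^{2}f_{2}^{3}f_{10}}{f_{20}^{2}}\pmod{10}.
\]
The right-hand side only contains $f_{j}$ with $j$ even, so it is already a series in $q^{2}$; consequently the coefficients of all odd powers vanish automatically. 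Extracting the even powers and substituting $q^{2}\mapsto q$ produces
\[
\sum_{n=0}^{\infty} b^{2}_{8,5}(4n+1)\,q^{n}\equiv 2\,\frac{f_{2}^{2}f_{1}^{3}f_{5}}{f_{10}^{2}}\pmod{10}.
\]

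Finally, I would compare this with the generating function $\sum b^{2}_{2,5}(n)q^{n}=f_{2}^{2}f_{5}^{2}/(f_{1}^{2}f_{10}^{2})$ obtained from \eqref{main} with $\ell=2$, $m=5$, $k=2$. Their ratio is exactly $f_{1}^{5}/f_{5}$, which by the same binomial argument equals $1+5D(q)$ for some $D(q)\in\mathbb{Z}[[q]]$. Multiplying both sides by $2$ and reducing modulo $10$ therefore gives
\[
2\,\frac{f_{2}^{2}f_{1}^{3}f_{5}}{f_{10}^{2}}=2\,\frac{f_{1}^{5}}{f_{5}}\cdot\frac{f_{2}^{2}f_{5}^{2}}{f_{1}^{2}f_{10}^{2}}\equiv 2\sum_{n=0}^{\infty}b^{2}_{2,5}(n)\,q^{n}\pmod{10},
\]
from which \eqref{tm10} follows by comparing coefficients.

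The only delicate point is that $10$ is not a prime power and $2$ is not invertible modulo $10$, so one cannot simply argue modulo $5$ after dividing through by the factor of $2$; the manipulations must respect that. The key observation that makes everything go through is that whenever a factor of the shape $f_{1}^{5}/f_{5}$ (or its reciprocal) appears multiplied by $2$, one has $2f_{1}^{5}/f_{5}\equiv 2\pmod{10}$ and $2f_{5}/f_{1}^{5}\equiv 2\pmod{10}$. Beyond this, the argument is a routine $2$-dissection followed by extraction of the appropriate arithmetic progression.
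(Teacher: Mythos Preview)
Your proposal is correct and follows essentially the same route as the paper: both arguments start from the formula for $\sum b^{2}_{8,5}(2n+1)q^{n}$ obtained by squaring \eqref{f5_f1} (this is exactly \eqref{tm5_2} with $t=2$), reduce modulo $10$ via $2f_{j}^{5}\equiv 2f_{5j}\pmod{10}$ so that only even-index $f$'s remain, extract the even powers, and then apply the same binomial reduction once more to recognise $2\sum b^{2}_{2,5}(n)q^{n}$. The only cosmetic difference is that the paper simplifies the intermediate expression all the way to $2f_{2}^{8}/f_{4}^{8}$ before extracting, whereas you keep $f_{10}$ and $f_{20}$ around; the two expressions are congruent modulo $10$ and the logic is identical.
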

 \begin{proof}
 	Substituting $t=2$ in \eqref{tm5_2} and using the binomial theorem, we have
 	\begin{align}
 		\displaystyle \sum_{n=0}^\infty  b^{2}_{8,5}(2n+1)q^n &\equiv
 		2\frac{f_{2}^8}{f_{4}^8}\pmod{10}. \label{tm5_21}
 	\end{align}
 Extracting even powers of $q$ on both sides of the above equation \eqref{tm5_21}, we get
 	\begin{align}
 	\displaystyle \sum_{n=0}^\infty  b^{2}_{8,5}(4n+1)q^n &\equiv
 	2\frac{f_{1}^{10}f_2^2}{f_{2}^{10}f_1^2}\pmod{10}. \label{tm5_22}
 \end{align}
 Applying binomial theorem $ 2f_5 \equiv 2 f_1^5 \pmod{10} $ and $f^5_{j} \equiv 2 f_{5j} \pmod{10}$, we get 
 \begin{align}
 	\displaystyle \sum_{n=0}^\infty  b^{2}_{8,5}(4n+1)q^n &\equiv
 	2\frac{f_{5}^{2}f_2^2}{f_{10}^{2}f_1^2}\pmod{10}. \label{tm5_23}\nonumber\\ 
 	&\equiv 2\displaystyle\sum_{n=0}^{\infty} b^2_{2,5}(n)q^n\pmod{10}.
\end{align}
This completes the proof.
\end{proof}
\section{Congruences for $b^{3}_{2,3t}$}\label{s6}
\begin{thm}
For any nonnegative integer $n$ and $t \nmid 2$, we have 
    \begin{equation}\begin{split}
        b^{3}_{2,3t}(3n+2)\equiv 0 \pmod{6}.\label{tm7}        
    	\end{split}
    \end{equation}
\end{thm}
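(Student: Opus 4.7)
The plan is to apply the 3-dissection of $f_2^3/f_1^3$ given in \eqref{f2.3_f1.3} to the generating function and to exploit the fact that two of its four summands are automatically absorbed modulo~$6$. Setting $\ell=2$, $m=3t$, $k=3$ in \eqref{main} yields
\begin{equation*}
\sum_{n=0}^\infty b^{3}_{2,3t}(n)\,q^n \;=\; \frac{f_2^{\,3}\,f_{3t}^{\,3}}{f_1^{\,3}\,f_{6t}^{\,3}},
\end{equation*}
so I would multiply \eqref{f2.3_f1.3} through by $f_{3t}^{\,3}/f_{6t}^{\,3}$ and reduce modulo~$6$. The hypothesis on $t$ is used only to guarantee that $\gcd(2,3t)=1$, which is what makes \eqref{main} legitimately count $(2,3t)$-regular $3$-colored partitions.

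The four terms on the right-hand side of \eqref{f2.3_f1.3} carry coefficients $1$, $3q$, $6q^2$, $12q^3$, so the last two vanish modulo~$6$, leaving
\begin{equation*}
\sum_{n=0}^\infty b^{3}_{2,3t}(n)\,q^n \;\equiv\; \frac{f_{3t}^{\,3}}{f_{6t}^{\,3}}\!\left[\frac{f_6}{f_3}+3q\,\frac{f_6^{\,4}f_9^{\,5}}{f_3^{\,8}f_{18}}\right]\!\pmod{6}.
\end{equation*}
The decisive observation is that every $f_j$ occurring on the right has index $j$ divisible by~$3$: the factors $f_{3t}^{\,3}/f_{6t}^{\,3}$, $f_6/f_3$, and $f_6^{\,4}f_9^{\,5}/(f_3^{\,8}f_{18})$ are all power series in $q^3$. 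The first bracketed summand therefore contributes only to coefficients of $q^{3n}$, while the second (because of the lone prefactor~$q$) contributes only to coefficients of $q^{3n+1}$. No $q^{3n+2}$-coefficient appears on the right, which yields \eqref{tm7} at once.

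Since the statement falls out directly from the appropriate 3-dissection, there is no substantive obstacle in the proof; the only genuine step beyond bookkeeping is recognising that the higher-order terms of \eqref{f2.3_f1.3} are already divisible by~$6$, so that the mod-$6$ reduction collapses the dissection to a sum of two $q^3$-series (one multiplied by $q$), precisely skipping the residue class $3n+2$.
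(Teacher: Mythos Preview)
Your argument is correct and is essentially the paper's own proof: both plug the $3$-dissection \eqref{f2.3_f1.3} into the generating function and use that the $6q^{2}$ and $12q^{3}$ terms are divisible by~$6$, so the residue class $3n+2$ contributes nothing modulo~$6$. The only cosmetic difference is that the paper first extracts the $q^{3n+2}$ part exactly (obtaining $6q^{2}\,f_6^{2}f_9^{2}f_{18}^{2}f_{3t}^{3}/(f_3^{7}f_{6t}^{3})$) and then reads off the factor~$6$, whereas you reduce modulo~$6$ first and then extract.
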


\begin{proof}
    Substituting $\ell=2$, $m=3t$ and $k=3$ in \eqref{main}, we have 
    \begin{equation}
        \displaystyle\sum_{n=0}^\infty b^{3}_{2,3t}(n)q^n= \frac{f_2^3 f_{3t}^3}{f_1^3 f_{6t}^3}.
    \end{equation}
    Substituting \eqref{f2.3_f1.3} in the above equation, we have
    \begin{align}
        \displaystyle\sum_{n=0}^\infty b^{3}_{2,3t}(n)q^n &= 
         \frac{f_6 f_{3t}^3}{f_3 f_{6t}^3}+3q
       \frac{f_6^4 f_9^5 f_{3t}^3 }{f_3^8 f_{18} f_{6t}^3}
        +6q^2\frac{f_6^2 f_9^2 f_{18}^2 f_{3t}^3}{f_3^7 f_{6t}^3} 
        +12q^3\frac{f_6^2 f_{18}^5 f_{3t}^3}{f_3^6 f_9 f_{6t}^3}. \label{tm7_1}
    \end{align}
Extracting terms containing $q^{3n+2}$ from \eqref{tm7_1}, we get 
    \begin{align}
        \displaystyle\sum_{n=0}^\infty b^{3}_{2,3t}(3n+2)q^{3n+2} &= 
        6q^2\frac{f_6^2 f_9^2 f_{18}^2 f_{3t}^3}{f_3^7 f_{6t}^3} .\label{b323t}
    \end{align}
    From the above equation, we arrive at \eqref{tm7}.
\end{proof}

\begin{thm} For any positive integer $n$, we have
     \begin{align}
         b^{3}_{2,3}(6n+5)&\equiv 0 \pmod{12}.\label{tm8}\\
          b^{3}_{2,3}(6n+2)q^n &\equiv 6b^{3}_3(n) \pmod{12}.\label{tm88}
          \end{align}
\end{thm}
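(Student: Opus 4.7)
The plan is to specialize the equation \eqref{b323t} obtained in the preceding theorem to the case $t=1$ and then perform a $2$-dissection on the resulting series. From \eqref{b323t} with $t=1$ we have
\begin{equation*}
\sum_{n=0}^\infty b^{3}_{2,3}(3n+2)\, q^{3n+2}
= 6q^2 \,\frac{f_9^{2}\, f_{18}^{2}}{f_3^{4}\, f_6},
\end{equation*}
and dividing through by $q^2$ and replacing $q^3\mapsto q$ yields
\begin{equation*}
\sum_{n=0}^\infty b^{3}_{2,3}(3n+2)\, q^{n}
= 6 \,\frac{f_3^{2}\, f_6^{2}}{f_1^{4}\, f_2}.
\end{equation*}

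Next, since the overall factor is $6$, to get a congruence modulo $12$ it suffices to reduce the quotient $f_3^{2} f_6^{2}/(f_1^{4} f_2)$ modulo $2$. The binomial-theorem congruences $f_1^{2}\equiv f_2$ (so $f_1^{4}\equiv f_2^{2}$) and $f_3^{2}\equiv f_6 \pmod 2$ then simplify this to
\begin{equation*}
\sum_{n=0}^\infty b^{3}_{2,3}(3n+2)\, q^{n}
\equiv 6\,\frac{f_6^{3}}{f_2^{3}} \pmod{12}.
\end{equation*}

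The crucial observation is that the right-hand side involves only even powers of $q$: by \eqref{1main} with $\ell=3$, $k=3$ (and $q\mapsto q^2$) one has $f_6^{3}/f_2^{3}=\sum_{n\ge 0} b^{3}_{3}(n)\, q^{2n}$. Equating the odd-index coefficients on both sides gives at once
\begin{equation*}
b^{3}_{2,3}(3(2n+1)+2) = b^{3}_{2,3}(6n+5) \equiv 0 \pmod{12},
\end{equation*}
which is \eqref{tm8}, while equating the even-index coefficients (and replacing $q^2\mapsto q$) gives
\begin{equation*}
\sum_{n=0}^\infty b^{3}_{2,3}(6n+2)\, q^{n}\equiv 6\,\frac{f_3^{3}}{f_1^{3}} \equiv 6\sum_{n=0}^\infty b^{3}_{3}(n)\, q^{n} \pmod{12},
\end{equation*}
which is \eqref{tm88}. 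There is no real obstacle here; the only thing to be careful about is the correct bookkeeping of the exponent-class $3n+2$ split further into residues modulo $6$, which is precisely what the mod-$2$ parity of the reduced series $f_6^{3}/f_2^{3}$ controls.
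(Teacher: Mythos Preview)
Your proof is correct and follows essentially the same approach as the paper: specialize \eqref{b323t} at $t=1$, reduce the resulting $6\cdot$-expression modulo $12$ via the binomial congruence $f_s^{2}\equiv f_{2s}\pmod 2$ to obtain $6f_6^{3}/f_2^{3}$, and then read off the odd and even parts. Your write-up is in fact slightly more explicit, since you record the intermediate step $6\,f_3^{2}f_6^{2}/(f_1^{4}f_2)$ and invoke \eqref{1main} to identify $f_6^{3}/f_2^{3}$ with the generating function of $b_3^{3}(n)$ in $q^{2}$.
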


\begin{proof}
Substituting $t=1$ in \eqref{b323t}, we get
    \begin{align}
        \displaystyle\sum_{n=0}^\infty  b^{3}_{2,3}(3n+2)q^{3n+2} &= 
        6q^2\frac{f_6^2 f_9^2 f_{18}^2 f_{3}^3}{f_3^7 f_{6}^3}. \label{tm8_1}
    \end{align}
Applying binomial theorem $6f_s^2 \equiv 6 f_{2s}\pmod{12}$ in \eqref{tm8_1}, we have
\begin{align}
        \displaystyle\sum_{n=0}^\infty  b^{3}_{2,3}(3n+2)q^n 
        &\equiv 6\frac{  f_{6}^3} { f_2^3} \pmod{12}.
\end{align}
Extracting the odd and even powers of $q$ on both sides of the above equation respectively, we arrive at \eqref{tm8} and \eqref{tm88} respectively, which completes the proof.
\end{proof}



{}
\end{document}